\newtheorem{thm}{Theorem}[section]
\newtheorem{cor}[thm]{Corollary}
\newtheorem{lem}[thm]{Lemma}
\newtheorem{prop}[thm]{Proposition}
\theoremstyle{definition}
\newtheorem{rem}[thm]{Remark}
\numberwithin{equation}{section}
\begin{document}
\title[clique number]{On the clique number of  non-commuting graphs of certain groups}%
\author{A. Abdollahi, A. Azad, A. Mohammadi Hassanabadi and M. Zarrin }%
\address{$^\mathbf{1}$Department of Mathematics, University of Isfahan, Isfahan 81746-73441, Iran}%
\address{$^\mathbf{2}$School of Mathematics, Institute for Research in Fundamental Sciences (IPM), P.O.Box: 19395-5746, Tehran, Iran.}%
\address{$^\mathbf{3}$ Shaikhbahaee University, Isfahan 81797-35296, Iran.}
\email{($^\mathbf{1,2}$A. Abdollahi) \;\; a.abdollahi@math.ui.ac.ir \;\ abdollahi@member.ams.org}%
\email{($^\mathbf{1}$A. Azad) \;\; a-azad@sci.ui.ac.ir}%
\email{($^\mathbf{1,3}$A. Mohammadi Hassanabadi) \;\; aamohaha@yahoo.com}%
\email{($^\mathbf{1}$M. Zarrin) \;\; m.zarrin@math.ui.ac.ir}%
%\thanks{}%
\subjclass{20D60}%
\keywords{Pairwise non-commuting elements of group; Non-commuting graph; Clique number of graph;}%
%\date{}%
%\dedicatory{}%
%\commby{}%
% ----------------------------------------------------------------
\begin{abstract} Let $G$ be a non-abelian group. The non-commuting graph $\mathcal{A}_G$ of $G$
is defined as the graph whose vertex set is the non-central
elements of $G$ and two vertices are joint if and only if they do
not commute. In a finite simple graph $\Gamma$ the maximum size
of a complete subgraph of $\Gamma$ is called the clique number of
$\Gamma$ and it is denoted by $\omega(\Gamma)$. In this paper we
characterize all non-solvable groups $G$
 with $\omega(\mathcal{A}_G)\leq 57$, where the number $57$ is the clique number of the non-commuting graph of the projective special linear group $\mathrm{PSL}(2,7)$.
 We also complete the determination of  $\omega(\mathcal{A}_G)$ for all finite minimal simple
 groups.
\end{abstract}
\maketitle
% ----------------------------------------------------------------
\section{Introduction and results}
Let $G$ be a non-abelian  group and $Z(G)$ be its center.
Following \cite{AAM} and \cite{Mogh}, the non-commuting graph
$\mathcal{A}_G$ of $G$ is defined as the graph whose  vertex set
is $G\backslash Z(G)$ and two distinct vertices $a$ and $b$ are
joint whenever $ab\not=ba$. Let $\Gamma$ be a simple graph. The
set of vertices of every complete subgraph of $\Gamma$ is called a
clique of $\Gamma$. The maximum size (if it exists) of a complete
subgraph of $\Gamma$ is called the clique number of $\Gamma$ and
it is denoted by $\omega(\Gamma)$. Thus a clique  of $\mathcal{A}_G$
is no more than a set of pairwise non-commuting elements of $G$.
However, as the following results show, the clique number of the
non-commuting graph of a group not only has some influence on the
structure of a group but also finding it, is important in some
areas  such as cohomology ring of a group.
 By a famous
result of Neumann \cite{N} answering a question of P. Erd\"os, we
know that  the finiteness of all cliques in $\mathcal{A}_G$
implies the finiteness of the factor group $G/Z(G)$ (and so
$\omega(\mathcal{A}_G)$ is finite). In \cite[Theorem 1.4]{AM},
non-solvable groups $G$ satisfying the condition
$\omega(\mathcal{A}_G)\leq 21$ are characterized. Specifically, such a
group $G$ is isomorphic to $Z(G) \times A_5$, where $A_5$ is the
alternating group of degree $5$. Also according to \cite[Theorem
1.5]{AM}, the derived length  of a non-abelian solvable group $G$
is at most $2\omega(\mathcal{A}_G)-3$.

For a prime number $p$,  a finite $p$-group $G$ is called
extra-special if the center, the Frattini subgroup and the
derived subgroup of $G$ all coincide and are cyclic of order $p$.
 The clique number of extra-special $p$-groups is important as it provides
  combinatorial information which can be used to calculate their cohomology lengths (The cohomology length of a non-elementary abelian
  $p$-group  is a cohomology invariant   defined as a result
  of a Serre's theorem \cite{S}). Chin \cite{Chin} has obtained upper and lower bounds for
clique numbers of  non-commuting graphs of extra-special
$p$-groups, for odd prime numbers $p$. Specifically, it is proved
in \cite[Theorem 2.2]{Chin} that if $G_n$ is an extra-special
group of order $p^{2n+1}$, then $\omega(\mathcal{A}_{G_1})=p+1$ and
$$np+1\leq \omega(\mathcal{A}_{G_{n+1}})\leq \frac{p(p-1)^n-2}{p-2}.$$
For $p=2$, it has been shown by Isaacs (see \cite[p. 40]{B}) that
$\omega(\mathcal{A}_G)=2m+1$ for any
extra-special group $G$ of order $2^{2m+1}$.\\

Finding the clique number of the non-commuting graph of a group
itself is   of independent interest as a pure combinatorial
problem. Brown in \cite{Br1} and \cite{Br2} has studied the
clique and chromatic numbers of $\mathcal{A}_{S_n}$, where $S_n$
is the symmetric group  of degree $n$. It is proved in \cite{Br2}
that $\omega(\mathcal{A}_{S_n})\not=\chi(\mathcal{A}_{S_n})$ for all
$n\geq 15$, where $\chi(\Gamma)$ is the chromatic number of the
graph $\Gamma$. It is easy to see that the  chromatic number of
$\mathcal{A}_G$ is equal to the minimum number (if it exists) of
abelian subgroups of $G$ whose set-theoretic union is the whole
group $G$.\\
In \cite{AM} the authors have determined non-solvable groups $G$ with $\omega(\mathcal{A}_G)\leq 21$, where the number $21$ is the clique number of the non-commuting graph of the least (with respect to the order) non-abelian simple group $A_5$. The clique number of the non-commuting graph of $\mathrm{PSL}(2,7)$ (which is  the second-least order non-abelian simple group)  is $57$. Here we give a characterization of non-solvable groups $G$ with $\omega(\mathcal{A}_G)\leq 57$.
\begin{thm}\label{1.3}
Let $G$ be a finite non-solvable group such that $\omega(\mathcal{A}_G)\leq
57$. Then $G$ has  one of the following structures
\begin{enumerate}
\item  $G\cong Z(G)\times \mathrm{PSL}(2,p)$, where $p\in\{5,7\}$;
 \item  $G=Z(G)K$, where $K$ is a subgroup of $G$ isomorphic to $\mathrm{SL}(2,p)$ and $p\in\{5,7\}$;
 \item $G=G''\langle a\rangle S$, where $a^2\in Z(G)$ and $G''\cong A_5$ or
$\mathrm{SL}(2,5)$, and $S$ is the solvable radical of $G$;
\item $G=G''\langle a\rangle Z(G)$, where $a^2\in Z(G)$ and $G''\cong \mathrm{PSL}(2,7)$ or
$\mathrm{SL}(2,7)$.
\end{enumerate}
\end{thm}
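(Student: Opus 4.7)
The proof plan is to identify the solvable radical $S=\mathrm{Rad}(G)$ and the second derived subgroup $G''$, then reconstruct $G$ from these pieces. The key monotonicity tool is that $\omega(\mathcal{A}_H)\leq\omega(\mathcal{A}_G)\leq 57$ for every non-abelian subquotient $H$ of $G$ (this follows because $Z(G)\cap H\leq Z(H)$ for subgroups, and because pairwise non-commuting cosets lift to pairwise non-commuting representatives for quotients). First I would show that $\mathrm{soc}(G/S)$ is a single non-abelian simple group $T$: if instead $T_1\times T_2\leq G/S$ with both $T_i$ non-abelian simple, then for cliques $C_i\subseteq T_i$ the Cartesian product $C_1\times C_2\subseteq T_1\times T_2$ is a clique of size $\omega(\mathcal{A}_{T_1})\,\omega(\mathcal{A}_{T_2})\geq 21\cdot 21=441$, contradicting the bound.

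Next I would identify $T$. Monotonicity forces $\omega(\mathcal{A}_T)\leq 57$. Thompson's classification of minimal simple groups reduces the candidates to $\mathrm{PSL}(2,q)$ for small $q$, $\mathrm{PSL}(3,3)$, and the Suzuki groups $\mathrm{Sz}(2^p)$; direct computation, using information on their conjugacy classes and maximal abelian subgroups together with explicit large-clique constructions, should eliminate everything except $\mathrm{PSL}(2,5)\cong A_5$ and $\mathrm{PSL}(2,7)$. Non-minimal simple groups such as $A_6$ and $A_7$ contain excluded minimal simple groups as sections and are dispatched by monotonicity. Thus $T\in\{A_5,\mathrm{PSL}(2,7)\}$, and since $\mathrm{Out}(T)\cong\mathbb{Z}/2$, the quotient $G/S$ equals either $T$ or the almost-simple extension $T.2$ (i.e.\ $S_5$ or $\mathrm{PGL}(2,7)$).

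Set $L=G''$. Since $T=T'$ is perfect, $G^{(k)}S/S$ stabilizes at $T$ from $k=1$ on; hence $LS/S=T$ and $L\cap S$ is a solvable normal subgroup of the perfect group $L$. Because $L$ is a perfect finite extension of the non-abelian simple group $T$ by the solvable kernel $L\cap S$, one has $L\cap S\leq Z(L)$, so $L$ is quasisimple. The Schur multipliers $M(A_5)\cong M(\mathrm{PSL}(2,7))\cong\mathbb{Z}/2$ then force $L\in\{A_5,\mathrm{SL}(2,5),\mathrm{PSL}(2,7),\mathrm{SL}(2,7)\}$. To assemble $G$: when $G/S=T$ one has $G=LS$, and separating $L\cap S=1$ from $L\cap S=Z(L)$ of order $2$ recovers cases (1) and (2) (together with the $a=1$ sub-cases of (3) and (4)); when $G/S=T.2$ one picks $a\in G$ mapping to the outer involution of $\mathrm{Aut}(T)$, so $G=L\langle a\rangle S$, and a tightness argument using the clique bound forces $a^2\in Z(G)$. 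For $T=\mathrm{PSL}(2,7)$ the identity $\omega(\mathcal{A}_L)=57$ saturates the bound and thereby forces $S=Z(G)$, giving case (4); for $T=A_5$ the surplus $57-21=36$ allows $S$ to be properly larger than $Z(G)$, yielding case (3).

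The principal obstacle, as I see it, is Step 2: systematically ruling out every non-abelian simple group $T$ with $\omega(\mathcal{A}_T)>57$. For the minimal simple groups one can exploit conjugacy class data and centralizer orders; but for groups like $A_6\cong\mathrm{PSL}(2,9)$, $\mathrm{PSL}(2,8)$, $\mathrm{PSL}(2,11)$, $\mathrm{PSL}(3,3)$ and $\mathrm{Sz}(8)$, verification requires hand-crafted large cliques or computer assistance, and this computational core is where the proof is most likely to be delicate.
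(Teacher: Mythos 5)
Your overall architecture (pass to $G/\mathrm{Sol}(G)$, identify the simple quotient, then rebuild $G$ via Schur multipliers and an outer involution) is the same as the paper's, but the two steps you treat as routine are precisely where the substance of the proof lies, and as written both have genuine gaps.

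First, the classification of non-abelian simple groups $T$ with $\omega(\mathcal{A}_T)\leq 57$. Thompson's theorem lists only the minimal simple groups, and your claim that non-minimal simple groups such as $A_6$ ``contain excluded minimal simple groups as sections'' is false: the only proper non-abelian simple section of $A_6\cong \mathrm{PSL}(2,9)$ is $A_5$, which is allowed, so monotonicity dispatches nothing there. What is actually needed (Theorem \ref{2} of the paper) is a minimal-counterexample argument combined with the Blyth--Robinson classification \cite{BR} of the finite simple groups all of whose proper simple sections lie in $\{A_5,\mathrm{PSL}(2,7)\}$; the resulting candidates (various $\mathrm{PSL}(2,q)$, $\mathrm{PSL}(3,3)$, $\mathrm{PSL}(3,5)$, $\mathrm{PSL}(3,7)$, $\mathrm{PSU}(3,3)$, $\mathrm{PSU}(3,4)$, $\mathrm{PSU}(3,7)$, $\mathrm{Sz}(2^p)$) are then eliminated not by hand-crafted cliques but by Endimioni's lemma \cite{E}, which gives $\nu_p(G)\leq\omega(\mathcal{A}_G)$ whenever distinct Sylow $p$-subgroups intersect trivially; counting Sylow subgroups for one well-chosen prime in each case, together with the formula $\omega(\mathcal{A}_{\mathrm{PSL}(2,p^n)})=p^{2n}+p^n+1$, leaves only $q\in\{4,5,7\}$. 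Your proposed route does not close this step.

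Second, your assertion that $L\cap S\leq Z(L)$ ``because $L$ is a perfect finite extension of $T$ by a solvable kernel'' is not a theorem: $C_5^2\rtimes \mathrm{SL}(2,5)$ is perfect with a non-central solvable radical. Centrality of the solvable radical is exactly what the clique bound must be invoked to prove, and this is where most of the paper's work goes. For $T\cong A_5$ (Lemma \ref{15}) one analyses $C_G(S)$ case by case and, whenever $S$ fails to be centralized, builds explicit cliques of size $60$, $63$ or $65$ by perturbing a $21$-element clique of $G/S$ with non-commuting elements of $S$; for $T\cong\mathrm{PSL}(2,7)$ (Lemma \ref{6}) one combines the saturation $\omega(\mathcal{A}_G)=\omega(\mathcal{A}_{G/S})=57$ with the fact that every vertex of $\mathcal{A}_{\mathrm{PSL}(2,7)}$ extends to a maximum clique (Proposition \ref{50}) to force $S=Z(G)$. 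Your ``tightness argument'' gestures at the second mechanism but supplies neither construction, so the proposal is an outline of the statement rather than a proof.
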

In \cite[Lemma 4.4]{AAM},  the clique numbers of $\mathcal{A}_G$ of
all projective special linear groups $G=\mathrm{PSL}(2,q)$ have
been obtained. A family of the  minimal simple groups (i.e.
finite non-abelian simple groups all of whose  proper subgroups are
solvable) are projective special linear groups of degree 2 over a
finite field. All minimal simple groups  were completely
classified by a well-known result of Thompson \cite{T}. In
Section 3, we shall find the clique number of $\mathcal{A}_G$ for
the remaining finite minimal simple groups $G$ i.e., the Suzuki
groups $\mathrm{Sz}(2^{2m+1})$ and the projective special linear
group $\mathrm{PSL}(3,3)$ over the field with 3 elements. As
Thompson's classification of the minimal simple groups is a very
useful tool to obtain solvability criteria  in the class of
finite groups (see \cite{W} for a recent and interesting
application of Thompson's theorem), we hope that these new
information  might be useful to obtain new solvability
criterion.\\
A clique of a graph $\Gamma$ is called a maximum clique if its
size is $\omega(\Gamma)$. We say that  a clique $X$ of a graph
can be extended to a maximum clique if there exists a maximum
clique containing $X$. We will prove that every clique of
$\mathcal{A}_G$ for every minimal simple group $G$ except
$\mathrm{PSL}(3,3)$ can be extended to a maximum clique of
$\mathcal{A}_G$ (see Proposition \ref{50} and Theorem \ref{1.4}).
\begin{thm}\label{1.4} Let $G=\mathrm{Sz}(q)$ ($q=2^{2m+1}$ and $m>0$) be the Suzuki
group over the field with $q$ elements (see \cite[p. 182]{HB}).
Then
\begin{enumerate}
\item $\omega(Sz(q))=(q^2+1)(q-1)+\frac{q^2(q^2+1)}{2}+\frac{q^2(q^2+1)(q-1)}{4(q+2r+1)}
+\frac{q^2(q^2+1)(q-1)}{4(q-2r+1)}$, where $r=2^m$.
\item Every clique of  $\mathcal{A}_G$ can be extended to a
maximum clique  of $\mathcal{A}_G$.
\end{enumerate}
\end{thm}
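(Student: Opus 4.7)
The plan is to analyze the subgroup geometry of $G = \mathrm{Sz}(q)$: a Sylow $2$-subgroup $U$ has order $q^2$ with elementary abelian center $Z(U)$ of order $q$ consisting of the involutions of $U$; three conjugacy classes of cyclic maximal tori $A_0, A_1, A_2$ of orders $q-1$, $q+2r+1$, $q-2r+1$ occur. All of these tori and the Sylow $2$-subgroups are TI (trivial intersection) subgroups, each $A_i$ is self-centralizing with $|N_G(A_i)|$ equal to $2(q-1)$, $4(q+2r+1)$, $4(q-2r+1)$ respectively, and the centralizer formulas read $C_G(x) = \langle x\rangle Z(U)$ of order $2q$ for $x\in U\setminus Z(U)$, while $C_G(z) = U$ for any involution $z\in Z(U)$. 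Moreover, the orders $q-1$, $q+2r+1$, $q-2r+1$ are pairwise coprime.

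For the lower bound in (1), I would exhibit an explicit pairwise non-commuting set. From each of the $q^2+1$ Sylow $2$-subgroups $U$, pick one representative in each of the $q-1$ cosets $xZ(U) \subseteq U\setminus Z(U)$; these $(q^2+1)(q-1)$ elements of order $4$ are pairwise non-commuting, since distinct cosets inside a single $U$ give non-commuting pairs and $C_G(x)\subseteq U$, combined with the TI property, rules out commuting pairs from distinct Sylows. Adjoin one non-identity element from each conjugate of $A_0$, $A_1$, $A_2$; this contributes $|G|/|N_G(A_i)|$ additional elements per class, matching the three remaining summands. Self-centralization of the tori together with pairwise coprimality of the torus orders and disjointness from the Sylow $2$-subgroups (by odd order) ensures the full set is pairwise non-commuting.

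For the upper bound, I would partition an arbitrary clique $C$ by element type. Each torus conjugate is TI and abelian, so contributes at most one element, bounding each torus contribution by the number of conjugates. For the $2$-part, $|C\cap U| \leq q-1$: if some $z\in Z(U)$ lies in $C$ then every element of $U$ centralizes $z$, forcing $|C\cap U| = 1$; otherwise $C\cap U$ consists of order-$4$ elements and meets each of the $q-1$ non-central cosets of $Z(U)$ in at most one point. Summing over all Sylows and torus classes yields exactly the claimed formula, completing (1).

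For part (2), the strategy is canonical completion. Given any clique $C$, for each torus conjugate not meeting $C$ adjoin any non-identity element of it, and for each Sylow $U$ with $C\cap Z(U)=\emptyset$ fill in representatives of the unrepresented non-central cosets of $Z(U)$. The centralizer formulas ensure each addition fails to commute with every previously chosen element, since the added element's centralizer is contained in a subgroup (a Sylow $2$-subgroup or a torus conjugate) disjoint from the remainder of $C$. The main obstacle, which I expect to occupy the delicate portion of the argument, is handling the possibility that $C\cap Z(U)\neq\emptyset$ for some Sylow $U$: such an involution blocks every other element of $U$ from joining, so a careful case analysis, using the freedom in choosing coset and torus representatives, is required to show that the extended clique still attains the claimed maximum.
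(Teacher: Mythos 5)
Your part (1) is correct and is essentially the paper's own argument: the paper likewise uses the partition of $G=\mathrm{Sz}(q)$ into the $q^2+1$ Sylow $2$-subgroups and the conjugates of the three cyclic tori, shows the Sylow $2$-subgroup $F$ is an AC-group with $\omega(\mathcal{A}_F)=q-1$ (one clique element per nontrivial coset of $Z(F)$, since $C_F(x)=\langle x\rangle Z(F)$ has order $2q$), and sums the contributions of the parts via Lemma \ref{20}(1); your hands-on lower and upper bounds carry out the same computation directly.

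For part (2) there is a genuine gap, located exactly where you flagged it, and no ``careful case analysis'' can close it, because that case is a counterexample. Let $z$ be any involution of $G$; then $z\in Z(U)$ for a unique Sylow $2$-subgroup $U$ and $C_G(z)=U$, so any clique containing $z$ meets $U$ in exactly $\{z\}$. On the other hand, your own upper-bound count (equivalently, Lemma \ref{20}(1)) shows that a clique attains the value in (1) only if it meets \emph{every} Sylow $2$-subgroup in exactly $q-1$ pairwise non-commuting elements, which are then necessarily all of order $4$; hence no maximum clique contains an involution, and the singleton clique $\{z\}$ cannot be extended to a maximum clique. Since $q-1\ge 7$, statement (2) is therefore false for every admissible $q$. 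The paper's proof does not avoid this: it invokes Lemma \ref{20}(2), whose ``straightforward'' proof tacitly identifies a clique of $\mathcal{A}_G$ contained in $A_i$ with a clique of $\mathcal{A}_{A_i}$, an identification that breaks down for elements of $Z(A_i)\setminus Z(G)$ --- precisely the involutions in $Z(F)$ here, as $Z(G)=1$ while $|Z(F)|=q$. What your canonical-completion argument does establish, and what the correct assertion should be, is that every clique of $\mathcal{A}_G$ containing no involution extends to a maximum clique.
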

\begin{thm}\label{26}
 $\omega(\mathcal{A}_{\mathrm{PSL}(3,3)})=1067$.
\end{thm}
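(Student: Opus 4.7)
The plan is to compute $\omega(\mathcal{A}_G)$ for $G=\mathrm{PSL}(3,3)$ by a linear-programming argument over the maximal abelian subgroups of $G$. The first step is to classify and count these. Using the character-table data of $G$ (12 classes with centralizer orders $|G|,48,54,9,8,6,8,8,13,13,13,13$) I would show that every maximal abelian subgroup of $G$ is conjugate to exactly one of five types, with counts coming from the corresponding normalizer orders: the Singer cycle $Z_{13}$ ($144$ copies, from $|N_G(Z_{13})|=39$); the non-split torus $Z_8$ ($351$, from $|N_G(Z_8)|=16$); the cyclic $Z_6$ ($468$, from $|N_G(Z_6)|=12$); the split torus $V_4$ ($234$, parameterized by unordered frames of lines in $V=\mathrm{GF}(3)^3$); and the elementary abelian $Z_3\times Z_3$ ($130$ copies), which split as $26$ \emph{pure transvection groups} of the form $\{I+vu^{\top}:u(v)=0\}$ (one per center-line and one per axis-hyperplane of $V$, consisting only of class-$3A$ transvections) and $104$ \emph{mixed} centralizers of class-$3B$ regular unipotents (each containing $2$ transvections and $6$ regular unipotents).

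The second step counts, for each conjugacy class, the maximal abelian subgroups of each type containing a representative. Every non-identity element of $G$ not in class $2A$ or $3A$ lies in a unique maximal abelian subgroup, namely its own cyclic centralizer. An involution lies in $3+4+6=13$ maximal abelians (three $Z_8$, four $Z_6$, and six $V_4$, all inside its centralizer $\mathrm{GL}_2(3)$), and a class-$3A$ transvection lies in $9+2+2=13$ (nine $Z_6$, two pure and two mixed $Z_3\times Z_3$). Writing $s_X$ for the number of clique elements of class $X$ and applying the relation $\sum_{A\in\mathcal{M}}|A\cap S|\le|\mathcal{M}|$ separately to each of the five types of maximal abelian $\mathcal{M}$ gives
\begin{gather*}
s_{13}\le 144,\qquad s_4+s_8+3s_2\le 351,\qquad s_6+4s_2+9s_{3A}\le 468,\\
6s_2\le 234,\qquad 2s_{3A}\le 26,\qquad 2s_{3A}+s_{3B}\le 104.
\end{gather*}
Summing the first three and using the last then yields $|S|\le 1067-6s_2-10s_{3A}\le 1067$, with equality forcing $s_2=s_{3A}=0$.

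The matching lower bound is realized by an explicit clique: pick one order-$13$ generator from each of the $144$ Singer cycles, one element of order $4$ or $8$ from each of the $351$ copies of $Z_8$, one order-$6$ generator from each of the $468$ copies of $Z_6$, and one class-$3B$ regular unipotent from each of the $104$ mixed $Z_3\times Z_3$'s. Each chosen element lies in exactly one maximal abelian subgroup of $G$ (its own centralizer), so no two picks share a maximal abelian and the $144+351+468+104=1067$ chosen elements are pairwise non-commuting. The principal technical obstacle is the enumeration of the two geometric flavors of $Z_3\times Z_3$ subgroups together with the delicate incidence count that each class-$3A$ transvection lies in exactly nine $Z_6$, two pure $Z_3\times Z_3$, and two mixed $Z_3\times Z_3$; this rests on the analysis of the non-abelian centralizer of a transvection, which is isomorphic to the Heisenberg group of order $27$ extended by an involution of the form $\mathrm{diag}(-1,1,-1)$.
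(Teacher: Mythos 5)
Your proof is correct and, once the dust settles, it is essentially the paper's own argument: the only inequalities you actually use in the final summation are those for the $Z_{13}$, $Z_8$, $Z_6$ and mixed $Z_3\times Z_3$ types, i.e.\ the fact that the $144+351+468+104=1067$ abelian centralizers of orders $13$, $8$, $6$ and $9$ cover $G$ and each meets a clique in at most one element, and your lower-bound clique is exactly the same set of representatives the paper exhibits. The extra classification of maximal abelian subgroups (the $234$ copies of $V_4$ and the $26$ pure root groups) and the incidence counts ($13$ maximal abelians through each involution and each transvection) are correct but not needed to reach $1067$; they only buy the additional (true) information that every maximum clique avoids involutions and transvections.
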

We use the usual notation: for example $C_G(a)$ is the centralizer
of an element $a$ in a group $G$, $N_G(H)$ is the normalizer of a
subgroup $H$ in $G$, $\mathrm{GL}(n,q)$, $\mathrm{SL}(n,q)$,
$\mathrm{PGL}(n,q)$ and $\mathrm{PSL}(n,q)$ denote respectively,
the general linear group, the special linear group, the projective
general linear group, and the projective special linear group of
degree $n$ over the finite field of order $q$, and $D_{2n}$ is the
dihedral group of order $2n$. A family $\{G_1,\dots,G_k\}$ of
proper subgroups  of a group $G$ is called a partition of $G$ if
every non-identity element of $G$ belongs to exactly one of
the $G_i$'s.
\section{Proofs}
To prove Theorem \ref{1.3} we need the following lemmas.
\begin{lem}\label{1} Let $G$ be a finite non-abelian group.
\begin{enumerate}
\item[(i)] For any non-abelian subgroup $H$ of $G$, $\omega(\mathcal{A}_H)\leq \omega(\mathcal{A}_G)$.
\item[(ii)] For any non-abelian  factor group $G/N$ of $G$,
$\omega(\mathcal{A}_{\frac{G}{N}})\leq \omega(\mathcal{A}_G)$.
\end{enumerate}
\end{lem}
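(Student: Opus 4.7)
My plan is to prove both parts by a direct lifting argument: given a maximum clique in $\mathcal{A}_H$ or $\mathcal{A}_{G/N}$, I would produce a clique of the same size in $\mathcal{A}_G$. The essential observation, used in both parts, is that a clique in $\mathcal{A}_G$ of size at least two cannot contain any element of $Z(G)$, since such an element would commute with every other clique member.

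For part (i), I would take a clique $X$ in $\mathcal{A}_H$ of size $\omega(\mathcal{A}_H)$. Because $H$ is non-abelian, $|X|\geq 2$. The pairwise non-commutation of elements of $X$ is an intrinsic property depending only on the group operation, which is the same in $H$ as in $G$, so the members of $X$ remain pairwise non-commuting in $G$. The only thing to check is that $X\subseteq G\setminus Z(G)$; but any hypothetical $x\in X\cap Z(G)$ would commute with each of the remaining elements of $X$, contradicting that $X$ is a clique. Hence $X$ is a clique in $\mathcal{A}_G$, yielding $\omega(\mathcal{A}_G)\geq|X|=\omega(\mathcal{A}_H)$.

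For part (ii), I would take a maximum clique $\bar{X}=\{\bar{x}_1,\dots,\bar{x}_k\}$ of $\mathcal{A}_{G/N}$ with $k=\omega(\mathcal{A}_{G/N})\geq 2$, pick a coset representative $x_i\in G$ for each $\bar{x}_i$, and verify that $\{x_1,\dots,x_k\}$ is a clique in $\mathcal{A}_G$. The $x_i$ are distinct because their cosets are. If $x_ix_j=x_jx_i$ for some $i\neq j$, then projection modulo $N$ would give $\bar{x}_i\bar{x}_j=\bar{x}_j\bar{x}_i$, contradicting the clique property of $\bar{X}$. Finally, if some $x_i$ were in $Z(G)$, then $\bar{x}_i$ would lie in $Z(G/N)$ and thus not be a vertex of $\mathcal{A}_{G/N}$, again a contradiction.

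The only mildly delicate point in either part is the centrality check, and in both cases it is handled by the one-line observation at the beginning. I do not expect a genuine obstacle; the lemma is a routine monotonicity statement for the invariant $\omega(\mathcal{A}_{\cdot})$ under passage to non-abelian subgroups and non-abelian quotients.
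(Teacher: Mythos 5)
Your argument is correct, and it is exactly the routine lifting argument the authors have in mind when they dismiss the proof as ``straightforward'' (the paper gives no details). Both the subgroup and quotient cases are handled properly, including the centrality checks, so there is nothing to add.
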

\begin{proof}
It is straightforward.
\end{proof}
\begin{lem}
$\omega(\mathcal{A}_{PGL(2,q)})=\begin{cases}
  4 & \hbox{if   $q=2$} \\
  10 & \hbox{if   $q=3$} \\
   q^2+q+1 & \hbox{if   $q>3$}
\end{cases}$
\end{lem}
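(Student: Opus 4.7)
The plan is to partition the non-identity elements of $\mathrm{PGL}(2,q)$ into maximal abelian subgroups and read the clique number off the partition. Recall that $\mathrm{PGL}(2,q)$ has $q+1$ Sylow $p$-subgroups (each elementary abelian of order $q$, with $p$ the characteristic), $q(q+1)/2$ conjugate split tori (each cyclic of order $q-1$), and $q(q-1)/2$ conjugate non-split tori (each cyclic of order $q+1$). These $q^2+q+1$ abelian subgroups pairwise intersect in $\{1\}$, and the count $(q+1)(q-1)+\tfrac{q(q+1)}{2}(q-2)+\tfrac{q(q-1)}{2}\cdot q=q^3-q-1=|\mathrm{PGL}(2,q)|-1$ shows that their non-identity elements exhaust $\mathrm{PGL}(2,q)\setminus\{1\}$. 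Since each subgroup is abelian, any clique of $\mathcal{A}_{\mathrm{PGL}(2,q)}$ meets each in at most one point, giving the uniform upper bound $\omega(\mathcal{A}_{\mathrm{PGL}(2,q)})\le q^2+q+1$.

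For $q\ge 4$ I would match this bound by choosing one element $x$ from each of the $q^2+q+1$ subgroups with the property that its centralizer in $\mathrm{PGL}(2,q)$ is exactly the subgroup it was taken from; two chosen elements that commuted would then have to lie in a common partition subgroup, contradicting disjointness. Every non-identity element of a Sylow $p$-subgroup has this property, and for a torus $T$ the same holds for every $x\in T$ other than the (at most one) involution in $T$, whose centralizer is the dihedral normalizer $N_G(T)$ of order $2|T|$. Since $q\pm 1\ge 3$ when $q\ge 4$, every torus contains a non-involutory element, and so a clique of size $q^2+q+1$ can be produced.

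For $q\in\{2,3\}$ I would invoke the isomorphisms $\mathrm{PGL}(2,2)\cong S_3$ and $\mathrm{PGL}(2,3)\cong S_4$ and argue directly. In $S_3$, the three transpositions together with any $3$-cycle form a clique of size $4$, and no larger one exists. In $S_4$ the partition bound $13$ is not tight because the split tori have order $2$; a case analysis by cycle type shows that a clique contains at most $3$ pairwise non-commuting transpositions, at most $4$ three-cycles (one per Sylow $3$-subgroup), at most $3$ four-cycles (one per cyclic subgroup of order $4$), and at most one double transposition. Moreover, including a double transposition $\sigma$ forces the exclusion of the two transpositions and the two four-cycles that commute with $\sigma$, so the total cannot exceed $9$. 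The maximum $3+4+3=10$ is attained by $\{(12),(13),(14)\}$ together with a generator of each Sylow $3$-subgroup and each cyclic subgroup of order $4$.

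The main obstacle is the behaviour at involutions. A torus involution in $\mathrm{PGL}(2,q)$ has centralizer equal to the dihedral normalizer of its torus rather than the torus itself, so the partition-based lower bound only produces a clique of the advertised size when such involutions can be avoided in the selection. This is possible precisely when each torus has order at least $3$, i.e.\ when $q\ge 4$; at $q=3$ the split tori have order $2$, the candidate clique element is forced to be an involution, and the normalizer is the abelian Klein four group, causing the construction to collapse. Verifying the centralizer identification on both the good and bad sides of this dichotomy is the technical heart of the argument.
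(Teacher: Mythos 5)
Your argument is correct, but it follows a genuinely different route from the paper. The paper obtains the value for $q>3$, $q\neq 5$ essentially for free by sandwiching: Lemma \ref{1} gives $\omega(\mathcal{A}_{\mathrm{PSL}(2,q)})\leq\omega(\mathcal{A}_{\mathrm{PGL}(2,q)})\leq\omega(\mathcal{A}_{\mathrm{GL}(2,q)})$, and both outer quantities equal $q^2+q+1$ by \cite[Lemma 4.4]{AAM}; the cases $q=5,3,2$ are then read off from the isomorphisms with $S_5$, $S_4$ and $\mathrm{PSL}(2,2)$ together with Brown's computations in \cite{Br1}. You instead work directly inside $\mathrm{PGL}(2,q)$ with the partition into $q+1$ Sylow $p$-subgroups and the two families of tori (which is exactly Proposition \ref{11} of the paper, used there for a different purpose), getting the upper bound from the fact that a clique meets each abelian partition member at most once, and the lower bound by selecting representatives whose centralizers coincide with their partition subgroups --- which requires avoiding the torus involutions, possible precisely when $q\geq 4$. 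This buys you a self-contained and uniform treatment of all $q\geq 4$ (including $q=5$, where the paper must import $\omega(\mathcal{A}_{S_5})=31$ from \cite{Br1}), at the cost of needing the centralizer facts for both tori (the paper's Proposition \ref{11}(4) records them only for the split torus, though the non-split case is equally standard) and a hands-on analysis of $S_4$; your $S_4$ count is correct, though the step ``including a double transposition caps the total at $9$'' silently uses that deleting a disjoint pair of transpositions leaves at most $2$ pairwise non-commuting transpositions, which deserves a sentence. Your explanation of exactly why the construction degenerates at $q=3$ (the split tori are generated by involutions whose centralizer is the abelian Klein four-group) is a nice piece of insight absent from the paper's proof.
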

\begin{proof}By Lemma \ref{1}, we have $$\omega(\mathcal{A}_{\mathrm{PSL}(2,q)})\leq
\omega(\mathcal{A}_{\mathrm{PGL}(2,q)})\leq
\omega(\mathcal{A}_{\mathrm{GL}(2,q)}).$$ Now, if $q>5$ or $q=4$, then
by \cite[Lemma 4.4]{AAM}
 $$q^2+q+1 = \omega(\mathcal{A}_{\mathrm{PSL}(2,q)}) \leq
\omega(\mathcal{A}_{\mathrm{PGL}(2,q)}) \leq
\omega(\mathcal{A}_{\mathrm{GL}(2,q)})= q^2+q+1.$$ Thus
$\omega(\mathcal{A}_{\mathrm{PGL}(2,q)})=q^2+q+1$, for $q=4$ and
$q>5$. Also since $\mathrm{PGL}(2,5)\cong S_5$ and
$\mathrm{PGL}(2,3)\cong S_4$  it follows from  \cite[p. 2]{Br1}
that $\omega(\mathcal{A}_{\mathrm{PGL}(2,5)})=31$,
 $\omega(\mathcal{A}_{\mathrm{PGL}(2,3)})=10$ and as $\mathrm{PGL}(2,2)\cong\mathrm{PSL}(2,2)$, by \cite[Lemma 4.4]{AAM} we have
 that $\omega(\mathcal{A}_{\mathrm{PGL}(2,2)})=4$. This completes the
 proof.
\end{proof}
\begin{thm}\label{2} Let $G$ be a  non-abelian simple group such that  $\omega(\mathcal{A}_G) \leq
57$. Then $G \cong A_5$ or $G \cong \mathrm{PSL}(2,7)$.
\end{thm}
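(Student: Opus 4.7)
The plan is to combine Thompson's classification of minimal simple groups with the clique-number computations for minimal simple groups (from \cite[Lemma 4.4]{AAM}, Theorem~\ref{1.4}, and Theorem~\ref{26}), using Lemma~\ref{1} to transfer the hypothesis $\omega(\mathcal{A}_G)\leq 57$ to any minimal simple section of $G$. Since $G$ is non-abelian simple, it is non-solvable, and a standard consequence of Thompson's analysis of minimal non-solvable subgroups gives a subgroup $H\leq G$ and a normal subgroup $K\trianglelefteq H$ with $H/K$ a minimal simple group; Lemma~\ref{1} then yields $\omega(\mathcal{A}_{H/K})\leq \omega(\mathcal{A}_H)\leq \omega(\mathcal{A}_G)\leq 57$.

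First I would classify all minimal simple groups $M$ satisfying $\omega(\mathcal{A}_M)\leq 57$. By Thompson, $M$ lies in one of the five families $\mathrm{PSL}(2,2^p)$ ($p$ prime), $\mathrm{PSL}(2,3^p)$ ($p$ odd prime), $\mathrm{PSL}(2,p)$ ($p$ prime, $p>3$, $5\mid p^2+1$), $\mathrm{Sz}(2^p)$ ($p$ odd prime), or $\mathrm{PSL}(3,3)$. For $\mathrm{PSL}(2,q)$, \cite[Lemma 4.4]{AAM} gives $\omega(\mathcal{A}_{\mathrm{PSL}(2,q)}) = q^2+q+1$ for $q\geq 4$, $q\neq 5$ (with $\omega(\mathcal{A}_{A_5})=21$), so $q^2+q+1\leq 57$ leaves only $q\in\{4,5,7\}$, i.e.\ $M\cong A_5$ or $M\cong\mathrm{PSL}(2,7)$. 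Theorem~\ref{1.4} shows the smallest Suzuki group $\mathrm{Sz}(8)$ already has $\omega$ of order thousands, and Theorem~\ref{26} gives $\omega(\mathcal{A}_{\mathrm{PSL}(3,3)})=1067>57$. Hence among minimal simple groups only $A_5$ and $\mathrm{PSL}(2,7)$ satisfy the bound.

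In combination with the preceding reduction, every minimal simple section of $G$ is forced to be $A_5$ or $\mathrm{PSL}(2,7)$. To deduce that $G$ itself is one of these two groups, I would appeal to the classification of finite simple groups and argue case by case: any non-abelian simple group $G$ other than $A_5$ and $\mathrm{PSL}(2,7)$ contains a subgroup, or a section of a subgroup, whose clique number exceeds $57$. For example, $A_n$ with $n\geq 6$ contains $A_6\cong \mathrm{PSL}(2,9)$ with $\omega=91$; higher-rank classical and exceptional Lie-type simple groups contain some $\mathrm{PSL}(2,q)$ with $q\geq 8$, giving $\omega\geq 73$; and sporadic groups can be dealt with by direct inspection of their subgroup structure via the Atlas. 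Lemma~\ref{1} then contradicts $\omega(\mathcal{A}_G)\leq 57$ in each case.

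The main obstacle is precisely this last stage of the argument. Once the clique numbers of the minimal simple groups are tabulated, the minimal simple case is easy; but ruling out every other non-abelian simple group requires either a full appeal to the classification of finite simple groups or a delicate subgroup-lattice analysis, and this is where most of the detailed work must be concentrated.
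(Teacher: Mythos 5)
Your first two stages are sound and match the paper's in spirit: reduce to finiteness via Neumann, transfer the bound $\omega(\mathcal{A}_G)\leq 57$ to sections via Lemma~\ref{1}, and observe that among the minimal simple groups only $A_5$ and $\mathrm{PSL}(2,7)$ satisfy the bound. But the third stage is a genuine gap, and you have correctly identified it as the heart of the matter without actually filling it. Knowing that every minimal simple section of $G$ is $A_5$ or $\mathrm{PSL}(2,7)$ does not by itself rule out $G$; there exist simple groups (e.g.\ $\mathrm{PSU}(3,3)$, $\mathrm{PSL}(3,5)$, $\mathrm{Sz}(2^p)$) all of whose proper non-abelian simple sections are $A_5$ or $\mathrm{PSL}(2,7)$, so they cannot be excluded by finding a bad simple section inside them --- they must be eliminated by a direct computation on the group itself. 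Your plan of ``appeal to CFSG and argue case by case'' names no mechanism for doing this across all classical, exceptional, alternating and sporadic groups, and as written it is an assertion, not a proof.

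The paper closes this gap with two specific tools you are missing. First, it takes a \emph{minimal counterexample} $M$, so that every proper non-abelian simple section of $M$ is $A_5$ or $\mathrm{PSL}(2,7)$, and invokes \cite[Proposition 4]{BR} (Blyth--Robinson), which classifies such simple groups into an explicit finite list: $\mathrm{PSL}(2,2^m)$ ($m=4$ or prime), $\mathrm{PSL}(2,3^p)$, $\mathrm{PSL}(2,5^p)$, $\mathrm{PSL}(2,7^p)$, $\mathrm{PSL}(2,p)$ with $p>7$, $\mathrm{PSL}(3,3)$, $\mathrm{PSL}(3,5)$, $\mathrm{PSL}(3,7)$, $\mathrm{PSU}(3,3)$, $\mathrm{PSU}(3,4)$, $\mathrm{PSU}(3,7)$, and $\mathrm{Sz}(2^p)$. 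Second, to kill each candidate it uses \cite[Lemma 3]{E}: if $\omega(\mathcal{A}_G)\leq 57$ and the Sylow $p$-subgroups of $G$ pairwise intersect trivially, then $\nu_p(G)\leq 57$; choosing a suitable prime (e.g.\ $13$ for $\mathrm{PSL}(3,3)$, $43$ for $\mathrm{PSU}(3,7)$, $2$ for $\mathrm{Sz}(2^p)$) gives $\nu_p>57$ in every case, while the $\mathrm{PSL}(2,q)$ cases follow from the formula $q^2+q+1$. Note also that the paper's proof of Theorem~\ref{2} does not rely on Theorems~\ref{1.4} and~\ref{26} (which appear later), whereas your sketch quietly leans on them; that is harmless logically but shows you are reaching for heavier machinery than the Sylow-counting argument actually needed.
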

\begin{proof} By Neumann's result \cite{N}, $G/Z(G)$ is finite and since $G$ is a non-abelian
simple group, we have that $Z(G)=1$. Thus $G$ is finite.
 Suppose that the result is false, and let $M$ be a minimal counter example.
Thus every proper non-abelian simple section of $M$ is isomorphic
to $A_5$ or $\mathrm{PSL}(2,7)$. By \cite[Proposition 4]{BR} $M$
is isomorphic to one of the following:\\
 $\mathrm{PSL}(2,2^m)$, $m=4$ or a prime;\\
 $\mathrm{PSL}(2,3^p)$, $\mathrm{PSL}(2,5^p)$, $\mathrm{PSL}(2,7^p)$, $p$ a prime;\\
 $\mathrm{PSL}(2,p)$, $p>7$;\\
 $\mathrm{PSL}(3,3)$, $\mathrm{PSL}(3,5)$, $\mathrm{PSL}(3,7)$;\\
  $\mathrm{PSU}(3,3)$, $\mathrm{PSU}(3,4)$, $\mathrm{PSU}(3,7)$ (the projective special unitary group of degree $3$
  over the finite field of order $3$,$4$ and $7$ respectively) or\\
  $\mathrm{Sz}(2^p)$, $p$ an odd prime.\\
  Now, for every prime number $p$ and every integer $n \geq 0$, by
  \cite[Lemma 4.4]{AAM}, $\omega(\mathcal{A}_{\mathrm{PSL}(2,p^n)})=p^{2n}+p^n+1$.
Thus since $\mathrm{PSL}(2,2^2)\cong A_5$, among the projective
special linear groups, we only need to investigate
 $\mathrm{PSL}(3,3)$, $\mathrm{PSL}(3,5)$ and $\mathrm{PSL}(3,7)$.
For each prime divisor $p$ of $|G|$, let $\nu_p(G)$ be the number
of  Sylow $p$-subgroups of $G$. If $p$ is a prime number dividing
$|G|$ such that the intersection of any two distinct Sylow
$p$-subgroups is trivial, then by \cite[Lemma 3]{E}, we must have  $\nu_p(G) \leq 57$ (*).\\
Now $\mathrm{PSL}(3,3)$ has order $2^4 \times 3^3 \times 13$, so
$\nu_{13}(\mathrm{PSL}(3,3))>57$.\\
$\mathrm{PSL}(3,5)$ has order $2^5 \times 3 \times 5^3 \times 31$,
so $\nu_{31}(\mathrm{PSL}(3,5))>57$.\\
$\mathrm{PSL}(3,7)$ has order $2^5 \times 3 \times 7^3 \times 19$,
so $\nu_{19}(\mathrm{PSL}(3,7))>57$.\\
$\mathrm{PSU}(3,3)$ has order $2^5 \times 3^3 \times 7$, so
$\nu_{7}(\mathrm{PSU}(3,3))>57$.\\
$\mathrm{PSU}(3,4)$ has order $2^6 \times 3 \times 5^2 \times 13$,
so $\nu_{13}(\mathrm{PSU}(3,4))>57$.\\
$\mathrm{PSU}(3,7)$ has order $2^7 \times 3 \times 7^3 \times 43$,
so $\nu_{43}(\mathrm{PSU}(3,7))=1+43k $, for some $k> 0$, and
since $44$ does not  divide  $|\mathrm{PSU}(3,7)|$, so
$\nu_{43}(\mathrm{PSU}(3,7))>57$.\\
For $p$ an odd prime, $\mathrm{Sz}(2^p)$ has order $2^{2p}\times
(2^p-1)\times(2^{2p}+1)$ and $\nu_{2}(Sz(2^p))=2^{2p}+1\geq 65$
(see \cite[chapter XI, Theorem
 $3.10$ and its proof]{H}). Now (*) completes the proof.
\end{proof}
\begin{prop}\label{11} Let $G=\mathrm{PGL}(2,q)$, where $q$ is a power of a prime number $p$ and  let
$k=\gcd(q-1,2)$. Then
\begin{enumerate}
\item a Sylow $p$-subgroup $P$ of $G$ is an elementary abelian
group of order $q$ and the number of Sylow $p$-subgroups of $G$ is
$q+1$.
\item $G$ contains a cyclic subgroup $D$ of order $q-1$ such that
the number of conjugates of $D$ is $\frac{q(q+1)}{2}$.
\item $G$ contains a cyclic subgroup $I$ of order $q+1$ such that
the number of conjugates of $I$ is $\frac{q(q-1)}{2}$.
\item The set $\{P^x,D^x,I^x \;|\; x \in G\}$ is a partition for $G$.
If $q$ is odd, then the following hold for non-trivial elements $a\in D$ and $b\in P$.
\begin{enumerate}
\item If $a$ is not of order $2$, then $C_G(a)=D$.
\item If $a$ is of order $2$, then $C_G(a)\cong D_{2(q-1)}$.
\item $C_G(b)=P$.
\end{enumerate}
\item If $q\equiv 0 $  (mod $4$), then $G=\mathrm{PGL}(2,q)\cong \mathrm{PSL}(2,q)$ and
by Proposition 3.21 of \cite{AAM}, if $a$ is a non-trivial element
of $G$, then
$$C_G(a)=\begin{cases}
 P^x & \hbox{if $a\in P^x$}\\
 D^x & \hbox{if $a\in D^x$}  \\
 I & \hbox{if $a\in I^x$}
\end{cases}$$
\end{enumerate}
\end{prop}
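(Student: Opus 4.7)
The plan is to reduce everything to the natural action of $\mathrm{PGL}(2,q)$ on the projective line $\mathbb{P}^1(\mathbb{F}_q)$ and the rational canonical form of $2\times 2$ matrices over $\mathbb{F}_q$. The three subgroups $P$, $D$, $I$ will be realized as the stabilizer of a point of $\mathbb{P}^1(\mathbb{F}_q)$, the (pointwise) stabilizer of an unordered pair of $\mathbb{F}_q$-points, and the (setwise) stabilizer of an unordered pair of Galois-conjugate $\mathbb{F}_{q^2}$-points, respectively; their orders, cyclicities, and normalizers can then be read off directly.

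For (1)--(3) I would exhibit each group explicitly inside $\mathrm{GL}(2,q)$ and project. Take $P$ to be the image of the upper unitriangular subgroup; it is elementary abelian of order $q$, and its normalizer in $G$ is the image of the Borel, of order $q(q-1)$, yielding $q+1$ Sylow $p$-subgroups by Sylow counting. Take $D$ to be the image of the diagonal subgroup; since $\mathbb{F}_q^\times$ is cyclic, $D$ is cyclic of order $q-1$, and adjoining the Weyl element $w=\left(\begin{smallmatrix}0 & 1 \\ 1 & 0\end{smallmatrix}\right)$ (which acts on $D$ by inversion) produces a normalizer of order $2(q-1)$, giving $q(q+1)/2$ conjugates. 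Take $I$ to be the image of $\mathbb{F}_{q^2}^\times$ acting by multiplication on $\mathbb{F}_{q^2}$ viewed as a $2$-dimensional $\mathbb{F}_q$-space; $I$ is cyclic of order $q+1$, its normalizer of order $2(q+1)$ comes from adjoining the $\mathbb{F}_q$-Frobenius, and the conjugate count is $q(q-1)/2$.

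For (4), the partition property, the main point is that $P$, $D$, $I$ form trivial intersection subgroup families. A non-identity unipotent element fixes a unique point of $\mathbb{P}^1(\mathbb{F}_q)$ and so lies in a unique $P^x$; a non-central split semisimple element has two distinct $\mathbb{F}_q$-eigenlines and lies in a unique $D^x$; a non-split semisimple element has a unique Galois pair of eigenlines over $\mathbb{F}_{q^2}$ and lies in a unique $I^x$. Rational canonical form says every non-identity element of $G$ is of exactly one of these three types. The direct count
\[
(q+1)(q-1)\;+\;\tfrac{q(q+1)}{2}(q-2)\;+\;\tfrac{q(q-1)}{2}\cdot q \;=\; q^3-q-1 \;=\; |G|-1
\]
then confirms that the three families of conjugates exhaust $G\setminus\{1\}$. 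For (a)--(c) with $q$ odd, direct matrix computations give: a diagonal element $\mathrm{diag}(\lambda,\mu)$ with $\lambda\mu^{-1}\neq\pm 1$ is centralized in $\mathrm{GL}(2,q)$ only by diagonal matrices, so $C_G(a)=D$; the unique involution of $D$ comes from $\mathrm{diag}(1,-1)$ and is additionally centralized by $w$, so $C_G(a)=\langle D,w\rangle\cong D_{2(q-1)}$; and the centralizer of a non-trivial upper unitriangular element in $\mathrm{GL}(2,q)$ is $\{aI+N:a\in\mathbb{F}_q^\times,\,N\in P\}$ of order $q(q-1)$, projecting to $P$ of order $q$ in $\mathrm{PGL}(2,q)$.

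Finally, (5) is immediate: if $q\equiv 0\pmod 4$ then $q$ is a power of $2$ with $q-1$ odd, so $k=1$, and $\mathrm{PGL}(2,q)/\mathrm{PSL}(2,q)\cong\mathbb{F}_q^\times/(\mathbb{F}_q^\times)^2$ is trivial in characteristic $2$; hence $\mathrm{PGL}(2,q)\cong\mathrm{PSL}(2,q)$, and the centralizer description is exactly Proposition 3.21 of \cite{AAM} applied to the partition just obtained. The main obstacle will be the partition statement in (4): one must cleanly verify the T.I.\ property for each of the three families and check that the orbit sizes fit together to cover $G$ exactly once. Once that is in hand, the centralizer computations in (a)--(c) and the identification in (5) are routine consequences.
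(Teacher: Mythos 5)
Your proposal is correct, and all the numerical checks work out: the inventory $(q+1)(q-1)+\tfrac{q(q+1)}{2}(q-2)+\tfrac{q(q-1)}{2}\,q=q^3-q-1=|G|-1$ is right, the normalizer orders $q(q-1)$, $2(q-1)$, $2(q+1)$ give the stated numbers of conjugates, and the centralizer computations in (a)--(c) correctly account for the projective subtlety that $h$ centralizes $gZ$ when $hgh^{-1}=cg$ for a scalar $c$ (which is exactly what produces the extra dihedral factor for the involution $\mathrm{diag}(1,-1)$ and nothing extra in the unipotent case, where comparing eigenvalues forces $c=1$). The paper itself offers no argument at all here: its ``proof'' is a one-line citation to Chapter II of Huppert's \emph{Endliche Gruppen I}. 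So your route --- realizing $P$, $D$, $I$ as stabilizers of a point, a split pair, and a Galois-conjugate pair on $\mathbb{P}^1(\mathbb{F}_q)$, getting the trivial-intersection property from uniqueness of eigenline configurations via rational canonical form, and confirming the partition by counting --- is not so much a different proof as an actual proof where the paper delegates to the literature; it has the advantage of being self-contained and of making transparent exactly where the hypothesis ``$q$ odd'' enters (the existence of the involution in $D$ and the eigenvalue-negation symmetry). Two small points of care if you write it up fully: justify that the subgroups you adjoin really give the \emph{full} normalizers (e.g., by noting the conjugates of $I$ biject with the $q(q-1)/2$ Galois pairs, so the normalizer cannot be larger), and note that item (2) degenerates harmlessly for $q=2$ where $D$ is trivial.
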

\begin{proof}
The proof follows from the results in Chapter II of \cite{H}
concerning projective linear groups.
\end{proof}
A group $G$ is called an AC-group if the centralizer of every
non-central element is abelian.
\begin{lem}\label{9} Let $G$ be a non-abelian AC-group  such that
$\omega(\mathcal{A}_G)$ is finite. Then every non-empty clique of
$\mathcal{A}_G$ can be extended to a maximum clique set of $G$.
\end{lem}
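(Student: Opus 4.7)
The plan is to exploit the AC-hypothesis to show that the relation ``commutes with'' is an equivalence relation on $G\setminus Z(G)$, after which the lemma becomes essentially a triviality about transversals.

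First I would show that for any two non-central elements $a,b\in G$, if $ab=ba$ then $C_G(a)=C_G(b)$. Indeed, $b\in C_G(a)$, and since $C_G(a)$ is abelian (because $a\notin Z(G)$ and $G$ is an AC-group), every element of $C_G(a)$ commutes with $b$, so $C_G(a)\subseteq C_G(b)$; the reverse inclusion follows by the same argument applied to $b$. From this one immediately deduces transitivity: if $a,b,c\in G\setminus Z(G)$ with $ab=ba$ and $bc=cb$, then $a,c\in C_G(b)$, and since $C_G(b)$ is abelian, $ac=ca$. Together with the obvious reflexivity and symmetry, this shows that commutativity is an equivalence relation on $G\setminus Z(G)$, whose classes partition $G\setminus Z(G)$.

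Next I would interpret cliques in this language. A clique of $\mathcal{A}_G$ is precisely a subset of $G\setminus Z(G)$ that meets each equivalence class in at most one element (elements in the same class commute, and elements in different classes do not, by the contrapositive of the equivalence). Conversely, any system of representatives of the equivalence classes is a clique. Hence $\omega(\mathcal{A}_G)$ equals the number $N$ of equivalence classes, and this number is finite by hypothesis.

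Finally, given any non-empty clique $X$ of $\mathcal{A}_G$, its elements lie in pairwise distinct equivalence classes. For each of the (finitely many) classes not represented in $X$, choose one non-central element and adjoin it to $X$; denote the resulting set by $X'$. Any two distinct elements of $X'$ lie in distinct classes and hence do not commute, so $X'$ is a clique, and $|X'|=N=\omega(\mathcal{A}_G)$, making $X'$ a maximum clique that extends $X$. The main (and essentially only) substantive step is the transitivity of commutativity, which is what the AC-hypothesis is designed to deliver; once that is in hand there is no real obstacle.
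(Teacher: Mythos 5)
Your proof is correct and rests on the same mechanism as the paper's: the AC-hypothesis forces the sets $C_G(a)\setminus Z(G)$, for $a$ non-central, to partition $G\setminus Z(G)$, cliques are partial transversals of this partition, and one extends a given clique by adjoining one representative from each missed block. The only cosmetic difference is that you obtain the partition directly from the equivalence relation ``commutes with,'' whereas the paper obtains the same partition from the centralizers $C_G(a_1),\dots,C_G(a_n)$ of a fixed maximum clique.
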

\begin{proof} Let $\omega(\mathcal{A}_G)=n$. Then there exist elements
$a_1,\dots, a_n$ in $G$ such that $[a_i,a_j]\neq 1$, for all
$i\neq j$. Thus $G=C_G(a_1)\cup \cdots \cup C_G(a_n)$ and also
$C_G(a_i)\cap C_G(a_j)=Z(G)$, since $G$ is an AC-group. Therefore
$\{C_G(a_i)\backslash Z(G) \;|\; i=1,\dots,n\}$ is a partition for
$G\backslash Z(G)$. Let $X$ be a clique of $\mathcal{A}_G$. Then, for each $i$,
$1\leq i\leq m$,  $|X \cap C_G(a_i)|\leq 1$, as each $C_G(a_i)$
is abelian. Let
$$I=\{i\in \{1,2,\ldots,n\} \;|\; X\cap
C_G(a_i)=\varnothing\}.$$  For each $\ell\in I$, choose an
element $b_\ell$ in $C_G(a_\ell)\backslash Z(G)$. Then $X \cup
\{b_\ell \;|\; \ell\in I\}$ is the required clique set.
\end{proof}
\begin{prop} \label{50} Let $G=\mathrm{PSL}(2,q)$ or $\mathrm{PGL}(2,q)$, where $q$ is a power of a prime $p$. Then
any singleton containing a non-central element of $G$ can be
extended to a maximum clique set of $\mathcal{A}_G$.
\end{prop}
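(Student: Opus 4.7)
The plan is to produce, for any prescribed non-central element $x\in G$, a maximum clique of $\mathcal{A}_G$ containing $x$. The tool is the partition $\{P^g, D^g, I^g\mid g\in G\}$ of Proposition~\ref{11}(4) (formulated for $\mathrm{PGL}(2,q)$, with an analogous partition of $\mathrm{PSL}(2,q)$ obtained by intersection). This partition has exactly $(q+1)+\tfrac{q(q+1)}{2}+\tfrac{q(q-1)}{2}=q^2+q+1$ blocks, matching $\omega(\mathcal{A}_G)$ by \cite[Lemma~4.4]{AAM}. The clique will consist of one representative from each block.

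When $q$ is a power of $2$, Proposition~\ref{11}(5) shows that $G=\mathrm{PSL}(2,q)=\mathrm{PGL}(2,q)$ is an AC-group, so Lemma~\ref{9} yields the result immediately.

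When $q$ is odd, Proposition~\ref{11}(4)(a)(c), together with the analogous statement for non-involutions in $I^g$ (which follows from the same reasoning applied to the non-split torus), tells us that the centralizer in $G$ of any non-involution in one of the blocks $P^g$, $D^g$, $I^g$ is precisely that block. Since distinct blocks meet only in the identity, any transversal of non-involutions is automatically a clique of size $q^2+q+1$. To build one containing $x$, let $Q_0$ be the block containing $x$; include $x$, and for each other block $Q$ pick a non-involution $a_Q\in Q$ not commuting with $x$. If $x$ is itself a non-involution, then $C_G(x)=Q_0$ and any non-involution of $Q\ne Q_0$ works. If $x$ is an involution (only possible when $Q_0$ is a $D$- or $I$-block), then $C_G(x)$ is the dihedral group from Proposition~\ref{11}(4)(b), whose non-involution elements all lie in $Q_0$, so the same choice works.

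The principal obstacle is ensuring that every block contains a non-involution to draw from, which fails only when some $D$- or $I$-block has order $2$. In those cases $G$ is one of the small groups $S_3$, $A_4$, $S_4$, or $A_5$, corresponding respectively to $\mathrm{PSL}(2,2)=\mathrm{PGL}(2,2)$, $\mathrm{PSL}(2,3)$, $\mathrm{PGL}(2,3)$, and $\mathrm{PSL}(2,5)$. The first, second, and fourth are AC-groups and fall under Lemma~\ref{9}; the remaining group $S_4$ is handled by direct inspection.
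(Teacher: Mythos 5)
Your overall strategy is the same as the paper's: take the standard partition of $G$ into the conjugates of a Sylow $p$-subgroup and of the two cyclic ``torus'' subgroups, note that the number of blocks equals $\omega(\mathcal{A}_G)$ (for $q$ large enough), and extend $\{x\}$ by a transversal of the remaining blocks. In fact you execute the main case more carefully than the paper does: the paper's proof says to take an ``arbitrary non-trivial element'' from each other block and defers to \cite[Proposition 3.21]{AAM}, whereas you correctly isolate the one genuine difficulty for odd $q$ --- an involution lying in a torus block has a dihedral centralizer that reaches into other blocks --- and repair it by choosing non-involutions, whose centralizers are exactly their own blocks. That argument is sound whenever every torus block has order at least $3$, and your list of leftover groups ($S_3$, $A_4$, $S_4$, $A_5$) is the right one; the first, second and fourth are indeed AC-groups, so Lemma \ref{9} disposes of them.

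The last step, however, fails: $\mathrm{PGL}(2,3)\cong S_4$ cannot be ``handled by direct inspection'', because the statement is actually false there. Take $x=(12)(34)$, the involution of an $I$-block. Its centralizer is the Sylow $2$-subgroup $\{e,(12)(34),(13)(24),(14)(23),(12),(34),(1324),(1423)\}\cong D_8$, so a clique containing $x$ avoids the other seven elements of this $D_8$; the $16$ remaining non-identity elements of $S_4$ split into $8$ commuting pairs (the two generators of each of the four Sylow $3$-subgroups, the two mutually inverse $4$-cycles in each of the other two cyclic subgroups of order $4$, and the two disjoint transpositions in each of the remaining two Klein subgroups), so any clique containing $x$ has size at most $1+4+2+2=9<10=\omega(\mathcal{A}_{S_4})$. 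Hence $\{(12)(34)\}$ does not extend to a maximum clique. To be fair, this is a defect of the proposition as stated rather than of your argument alone: the paper's own proof dismisses the $\mathrm{PGL}$ case with ``the proof is similar'' and never confronts $q=3$, and the gap is harmless for the paper's purposes since Proposition \ref{50} is only invoked (in Lemma \ref{6}) for $q=7$. But a complete proof must either exclude $\mathrm{PGL}(2,3)$ from the statement or restrict which singletons are claimed to extend; as written, your appeal to direct inspection would uncover a counterexample rather than a verification.
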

\begin{proof}  We give only the proof for $G=\mathrm{PSL}(2,q)$, for the other group the proof is similar and
Lemma \ref{11} may be used in the proof.\\ By \cite[Proposition 3.21]{AAM}, $\mathcal{P}=\{P^x,A^x,B^x | x \in
G\}$  is a partition for $G$, where $P$ is a Sylow $p$-subgroup,
$A$ is a cyclic subgroup of order $\frac{q-1}{k}$
 and $B$ is a cyclic subgroup of order $\frac{q+1}{k}$, where $k=\gcd(q-1,2)$.
 Let $a$ be a non-trivial element of $G$. Then $a\in M$, for some
 $M\in \mathcal{P}$. Now take an arbitrary non-trivial element $b_N$ in each member $N\in\mathcal{P}$ which is different from $M$.
 Let $X$ be such a set of elements. For $q>5$,  it is not hard to see
 that $\{a\} \cup X$ is a maximum clique set for $\mathcal{A}_G$
 (see the proof of Proposition 3.21 of \cite{AAM}). For $q\leq 5$,
 see  the proof of Proposition 3.21 of \cite{AAM}.
\end{proof}
\begin{thm} \label{thm57}Let $G$ be a  semi-simple
group, such that $\omega(\mathcal{A}_G) \leq 57$. Then $G\cong
A_5$, $S_5$, $\mathrm{PSL}(2,7)$ or $\mathrm{PGL}(2,7)$.
\end{thm}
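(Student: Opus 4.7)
The plan is to exploit the structure of a finite semi-simple group $G$ (i.e., one with trivial solvable radical): the socle $N=\mathrm{Soc}(G)=S_1\times\cdots\times S_k$ is a direct product of non-abelian simple groups, the centralizer $C_G(N)$ is trivial, and $G$ embeds in $\mathrm{Aut}(N)$ while containing $N$ as a normal subgroup.

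First, each $S_i$ is a non-abelian subgroup of $G$, so by Lemma \ref{1}(i), $\omega(\mathcal{A}_{S_i})\leq \omega(\mathcal{A}_G)\leq 57$, and Theorem \ref{2} forces $S_i\cong A_5$ or $\mathrm{PSL}(2,7)$. I would next rule out $k\geq 2$: choosing cliques $\{a_1,\ldots,a_n\}\subseteq S_1$ and $\{b_1,\ldots,b_m\}\subseteq S_2$ in $\mathcal{A}_{S_1}$ and $\mathcal{A}_{S_2}$ with $n,m\geq 21$, the $nm$ elements $(a_i,b_j)\in S_1\times S_2\leq G$ are pairwise non-commuting, because any two distinct such pairs differ in some coordinate and the corresponding entries fail to commute in that simple factor. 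Since $Z(G)=1$, this produces a clique of size $\geq 441>57$ in $\mathcal{A}_G$, a contradiction. Hence $k=1$ and $N$ is simple.

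So $N\cong A_5$ or $\mathrm{PSL}(2,7)$. Using $\mathrm{Aut}(A_5)=S_5$ and $\mathrm{Aut}(\mathrm{PSL}(2,7))=\mathrm{PGL}(2,7)$ (each an index-$2$ extension of $N$), the only candidates are $G\in\{A_5,S_5,\mathrm{PSL}(2,7),\mathrm{PGL}(2,7)\}$. To close the argument I would verify that each of these four actually satisfies the bound: Theorem \ref{2} handles $A_5$ (clique number $21$) and $\mathrm{PSL}(2,7)$ (clique number $57$), while the lemma on $\mathrm{PGL}(2,q)$ preceding Theorem \ref{2} gives $\omega(\mathcal{A}_{\mathrm{PGL}(2,5)})=31$ (hence $\omega(\mathcal{A}_{S_5})=31$ via $S_5\cong \mathrm{PGL}(2,5)$) and $\omega(\mathcal{A}_{\mathrm{PGL}(2,7)})=7^2+7+1=57$. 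The only conceptually new step is the product-clique construction used to eliminate $k\geq 2$; everything else is a straightforward invocation of results already in the paper together with the explicit outer automorphism groups of $A_5$ and $\mathrm{PSL}(2,7)$.
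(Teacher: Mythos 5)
Your proof follows essentially the same route as the paper's: identify the socle (the paper's ``centerless CR-radical'') as a direct product of non-abelian simple groups each of clique number at most $57$, apply Theorem \ref{2} to each factor, eliminate the possibility of more than one factor by a clique-size count (the paper cites \cite[Lemma 2.2]{AM} where you give the explicit product-clique argument producing $nm\geq 441$ pairwise non-commuting elements), and finish via $C_G(N)=1$ together with $\mathrm{Aut}(A_5)\cong S_5$ and $\mathrm{Aut}(\mathrm{PSL}(2,7))\cong \mathrm{PGL}(2,7)$. The only step you elide is the finiteness of $G$, which the paper establishes first from Neumann's theorem: $\omega(\mathcal{A}_G)$ finite makes $G/Z(G)$ finite, and $Z(G)=1$ in a semi-simple group.
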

\begin{proof}
By Neumann's result \cite{N}, $G$ is finite, since in a
semi-simple group the center is trivial. Let $R$ be the
centerless CR-Radical of $G$. Then $R$ is a direct product of a
finite number of finite non-abelian simple groups, say $R\cong
S_1\times \ldots  \times S_m$. By Lemma \ref{1}, for each
$i\in\{1,\dots,m\}$, $\omega(S_i)\leq 57$. Now by Theorem
\ref{2}, for each $i\in\{1,\dots,m\}$, $S_i\cong A_5$ or $S_i\cong
\mathrm{PSL}(2,7)$. Since $\omega(\mathcal{A}_{S_i})\leq 21$,  it
follows from  \cite[Lemma 2.2]{AM} that $m=1$. Therefore $R\cong
A_5$ or $R\cong \mathrm{PSL}(2,7)$.
 We know that $C_G(R)=1$ and so $G$ is embedded into $Aut(R)$.
 If $R\cong A_5$, $Aut(R)\cong S_5$ and so $G\cong A_5$
 or $G\cong S_5$; if $R\cong \mathrm{PSL}(2,7)$, then   $Aut(R)\cong \mathrm{PGL}(2,7)$ and
 $G\cong \mathrm{PSL}(2,7)$ or $G\cong \mathrm{PGL}(2,7)$. This
 completes the proof.
\end{proof}
For a finite group $G$, $Sol(G)$  denotes the solvable radical of
$G$, i.e., the largest normal solvable subgroup of $G$.
\begin{cor}\label{7} Let $G$ be a finite group such that $\omega(\mathcal{A}_{\frac{G}{Sol(G)}})=57$. Then $\frac{G}{Sol(G)}\cong
\mathrm{PSL}(2,7)$ or $\frac{G}{Sol(G)}\cong \mathrm{PGL}(2,7)$.
\end{cor}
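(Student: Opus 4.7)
The plan is to reduce to Theorem \ref{thm57} by showing that $G/Sol(G)$ is semi-simple, and then to eliminate two of the four possible isomorphism types produced by that theorem by direct clique-number bookkeeping.

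First, I would check that $G/Sol(G)$ is semi-simple in the sense used in Theorem \ref{thm57}. By definition $Sol(G)$ is the largest normal solvable subgroup of $G$, so any solvable normal subgroup of the quotient $\overline{G}:=G/Sol(G)$ lifts to a solvable normal subgroup of $G$ containing $Sol(G)$; maximality forces that lift to equal $Sol(G)$, and hence $Sol(\overline{G})=1$. In particular $Z(\overline{G})=1$ (else the center would be a nontrivial normal abelian subgroup), so $\overline{G}$ qualifies as a semi-simple group in the sense of the preceding theorem.

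Next, since by hypothesis $\omega(\mathcal{A}_{\overline{G}})=57\leq 57$, Theorem \ref{thm57} applies and yields
\[
\overline{G}\in\{A_5,\;S_5,\;\mathrm{PSL}(2,7),\;\mathrm{PGL}(2,7)\}.
\]
It remains to rule out the first two possibilities by showing their non-commuting graphs do not have clique number $57$. For $A_5$, the discussion around \cite[Theorem 1.4]{AM} recalled in the introduction gives $\omega(\mathcal{A}_{A_5})=21\neq 57$. For $S_5$, note that $S_5\cong \mathrm{PGL}(2,5)$, and the lemma giving $\omega(\mathcal{A}_{\mathrm{PGL}(2,q)})$ (the second result of Section~2) supplies $\omega(\mathcal{A}_{S_5})=31\neq 57$. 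On the other hand, \cite[Lemma 4.4]{AAM} gives $\omega(\mathcal{A}_{\mathrm{PSL}(2,7)})=7^2+7+1=57$, and the same lemma of Section~2 gives $\omega(\mathcal{A}_{\mathrm{PGL}(2,7)})=7^2+7+1=57$, so the remaining two cases are genuinely realized.

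Combining these steps yields $\overline{G}\cong \mathrm{PSL}(2,7)$ or $\overline{G}\cong \mathrm{PGL}(2,7)$, as required. There is really no obstacle here: the substantive work is bundled inside Theorem \ref{thm57}, and the corollary reduces to the verification that $A_5$ and $S_5$ have clique numbers different from $57$, which is immediate from results already established in the excerpt.
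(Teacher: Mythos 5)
Your proof is correct and follows essentially the same route as the paper: the paper's one-line argument likewise observes that $G/Sol(G)$ has no non-trivial normal abelian subgroup (hence is semi-simple), invokes Theorem \ref{thm57}, and implicitly discards $A_5$ and $S_5$ because their clique numbers are $21$ and $31$ rather than $57$. You have merely made those implicit verifications explicit.
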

\begin{proof}
Since for any finite group $M$, $M/Sol(M)$ has no non-trivial and
proper normal  abelian subgroup, the proof follows from
Theorem \ref{thm57}.
\end{proof}
\begin{lem}\label{15} Let $G$ be a finite non-solvable group such that $\omega(\mathcal{A}_G) \leq 57$
 and $\frac{G}{Sol(G)}\cong A_5$. Then $G\cong Z(G)\times A_5$ or
 $G=Z(G)\mathrm{SL}(2,5)$.
\end{lem}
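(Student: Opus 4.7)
Set $S := Sol(G)$, so $G/S \cong A_5$. The plan is to extract a quasi-simple normal subgroup $L$ of $G$ with $L/Z(L) \cong A_5$, show that $L$ centralizes $S$, and then use the hypothesis $\omega(\mathcal{A}_G) \leq 57$ to force $S$ to be abelian and so equal to $Z(G)$.

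First I would apply the theory of components of finite groups. Since $A_5$ is simple and centreless, every component of $G$ either lies in $S$ (impossible, as components are non-solvable) or surjects onto $A_5$ modulo $S$; two distinct such components would commute while each surjecting onto $A_5$, contradicting $Z(A_5)=1$. So $G$ has exactly one component $L$; it is normal in $G$ and satisfies $LS = G$, $L \cap S = Z(L)$, and $L/Z(L) \cong A_5$. Because the Schur multiplier of $A_5$ is $\mathbb{Z}/2$, the classification of perfect central extensions of $A_5$ forces $L \cong A_5$ or $L \cong \mathrm{SL}(2,5)$. Next, $[L,S] \leq L \cap S = Z(L)$, so $[L,S,L] = [S,L,L] = 1$; the three-subgroup lemma together with $L' = L$ yields $[L,S] = 1$. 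In particular $Z(L) \leq Z(G) \leq S$ and $Z(G) \cap L = Z(L)$.

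The crux is to show $S$ is abelian. Suppose otherwise and choose pairwise non-commuting $s_1,s_2,s_3 \in S$ (three such elements exist in any non-abelian group, since a group is never the union of two proper subgroups). Lift a maximum clique of $L/Z(L) \cong A_5$ to elements $l_1,\dots,l_{21} \in L$; then for $i\neq j$ the commutator $[l_i,l_j]$ lies outside $Z(L)$, hence outside $S$. Since $L$ centralizes $S$, a direct computation gives
\[
[l_i s_k,\, l_j s_m] = [l_i,l_j]\,[s_k,s_m].
\]
When $(i,k) \neq (j,m)$, at least one of these two factors is non-trivial, and they cannot cancel, since $[l_i,l_j] \in L \setminus S$ while $[s_k,s_m] \in S$. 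Hence the $63$ elements $\{l_i s_k : 1 \leq i \leq 21,\ 1 \leq k \leq 3\}$ are pairwise non-commuting, contradicting $\omega(\mathcal{A}_G) \leq 57$. Therefore $S$ is abelian, and combined with $L \leq C_G(S)$ this forces $S \leq Z(G)$, so $S = Z(G)$ and $G = L Z(G)$. When $L \cong A_5$ we have $L \cap Z(G) = 1$ and so $G \cong Z(G) \times A_5$; when $L \cong \mathrm{SL}(2,5)$ we obtain $G = Z(G)\mathrm{SL}(2,5)$.

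The main obstacle is the clique-counting step: one must lift the $A_5$-clique carefully, so that all pairwise commutators stay outside $Z(L)$, and then verify the non-cancellation in the product $[l_i,l_j]\,[s_k,s_m]$. This non-cancellation is precisely what turns a $21$-clique in $L$ together with a $3$-clique in $S$ into a $63$-clique in $G$, providing the contradiction that is the quantitative heart of the argument.
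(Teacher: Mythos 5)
Your argument is correct and quite clean from the moment the component $L$ is in hand: the three-subgroup lemma gives $[L,S]=1$ directly, and the $63$-clique $\{l_is_k\}$ is essentially the same counting device the paper uses (there in the form $\{b_js_i\}$ after reducing to $G=S\,\mathrm{SL}(2,5)$). The genuine gap is at the very first step. You show that $G$ has \emph{at most} one component covering $A_5$ modulo $S$, and then conclude that $G$ has \emph{exactly} one; but the existence of a component is not a consequence of $G/Sol(G)\cong A_5$. For instance, let $V=\mathbb{F}_2^4$ be a faithful irreducible $A_5$-module and $G=V\rtimes A_5$. Then $Sol(G)=V$ and $G/Sol(G)\cong A_5$, yet $E(G)=1$: a component $L$ would be unique, hence normal, with $[L,V]\le L\cap V\le Z(L)$, so $[L,V]=1$ by your own three-subgroup argument; since $G=LV$ and $V$ is abelian this puts $V$ in $Z(G)$, contradicting faithfulness. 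So there are groups satisfying the structural hypothesis of the lemma that have no component at all, and your proof never engages with them.

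What excludes such groups is precisely the hypothesis $\omega(\mathcal{A}_G)\le 57$, which you do not invoke at this stage. A component exists only when $C_G(S)$ is non-solvable (if $E(G)=1$ then $C_G(S)\le C_G(F(G))\le F(G)\le S$ by the self-centralizing property of the generalized Fitting subgroup), and the paper spends the central portion of its proof ruling out $C_G(S)\le S$: it lifts the $21$ generators of the Sylow subgroups of $G/S\cong A_5$, uses $C_G(S)\le S$ to pair each lift whose image has prime order with an element of $S$ it fails to centralize, and assembles a clique of size $65>57$. An argument of this kind (together with the paper's further reduction, via a minimal non-solvable subgroup of $C_G(S)$, of the case where $C_G(S)$ is non-solvable but proper) is needed before your component $L$ exists and the rest of your proof can proceed.
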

\begin{proof} Let $S=Sol(G)$. Suppose that $C_G(S)=G$. Thus
$S\leq Z(G)$ and so $S=Z(G)$. Now, consider the central extension
$Z(G)\longrightarrow G\longrightarrow \frac{G}{Z(G)}$. By a
similar argument as in \cite[Lemma 4.2]{AM}, we have that
$K=G'\cap Z(G)$ is of order no more than $2$, $G=G'Z(G)$ and
$\frac{G'}{K}\cong A_5$. Thus \cite[Lemma 4.2]{AM} implies that
there is a subgroup $L$ of $G'$ such that $G'=K\times L$ and
$L\cong A_5$ or $G'\cong \mathrm{SL}(2,5)$. Therefore
$G=G'Z(G)=LKZ(G)=LZ(G)$ and it is clear that $L\cap Z(G)=1$ or
$G=G'Z(G)\cong SL(2,5)Z(G)$. Thus $G\cong A_5\times Z(G)$ or
$G=Z(G)\mathrm{SL}(2,5)$.\\

 Now suppose that $C_G(S)$ is a proper (normal) subgroup of $G$.
 If $C_G(S)$ is  solvable, $C_G(S)\leq S$. Now by \cite[Remark 2.9]{AM}, $\frac{G}{S}=\bigcup^{21}_{i=1} P_i$,
 where $P_1, \dots, P_{21}$ are all  the Sylow subgroups of
 $\frac{G}{S}$. Assume that $P_1, \ldots, P_{10}$
  are Sylow $3$-subgroups, $P_{11}, \ldots, P_{17}$
  are  Sylow $5$-subgroups and $P_{18}, \ldots, P_{21}$
  are Sylow $2$-subgroups of $G$. Now if we choose  any element $a_iS\in P_i\backslash \{1\}$ $(i=1,\ldots, 21)$, then the
  set $\{a_1S, \ldots, a_{21}S\}$ is a maximum clique set for
  $\frac{G}{S}$ and $P_i=C_{\frac{G}{S}}(a_iS)$.\\
For all $i\in\{1, \dots, 10\}$, $|a_iS|=3$ and for $i\in\{11,
\dots, 17\}$, $|a_iS|=5$. Thus
$C_{\frac{G}{S}}(a_iS)=\frac{\langle a_i\rangle S}{S}=\langle
a_iS\rangle$.  Since  $a_i\not\in S$ and for $i\in\{1,\dots,17\}$,
$|a_iS|$ is prime, $a_i\not\in C_G(S)$ for each
$i\in\{1,\dots,17\}$. Thus there exists $s_i\in S$ such that
$a_is_i\neq s_ia_i$ for each $i\in\{1,\dots,17\}$. It is now easy
to see that the set $\{a_i, a_is_i,a_i^2s_i \;|\; i=1, \dots,
10\}\cup \{a_j, a_js_j, a_j^2s_j, a_j^3s_j, a_j^4s_j \;|\; j=11, \dots,
17\}$ is a clique set of $\mathcal{A}_G$. It follows that
$\omega(\mathcal{A}_G)\geq 65$ which is a contradiction.

 Now suppose that $C_G(S)$ is not solvable. Thus $\frac{C_G(S)S}{S}$
is not  solvable and so  $C_G(S)S=G$. Let $N$ be a non-solvable
subgroup of $C_G(S)$ of the least order. It follows that $NS=G$,
 $$\frac{N}{N\cap S} \cong \frac{NS}{S}=\frac{G}{S}\cong A_5,$$ $Sol(N)=N\cap S$ and every
proper subgroup of $N$ is solvable.\\
 If $C_N(Sol(N))=N$, then
$Z(N)=Sol(N)$. By the first part of the proof, $N=Z(N)\times A_5$
or $N=Z(N)\mathrm{SL}(2,5)$ which imply that  $G=SN=S\times A_5$
or $G=S\mathrm{SL}(2,5)$, respectively. If $G=S\times A_5$, then
by \cite[Lemma 2.2]{AM}, $S$ is  abelian. It follows that
$G=SC_G(S)=C_G(S)$, a contradiction, as we are assuming $G\neq
C_G(S)$. Therefore $G=S \mathrm{SL}(2,5)$. Let $\{s_1, s_2, s_3\}$
be a clique of $\mathcal{A}_S$ and $\{b_1Z,b_2Z,\dots,b_{21}Z\}$
be a (maximum) clique set of $\frac{\mathrm{SL}(2,5)}{Z}\cong
A_5$, where $Z=Z(\mathrm{SL}(2,5))$. Then $[b_i, b_j]\not\in Z$,
whenever $i\neq j$ and $i,j\in\{1,2,\ldots,21\}$.  Now
$(b_is_r)(b_js_k)=(b_js_k)(b_is_r)$ if and only if $[b_i,
b_j]=[s_k^{-1}, s_r^{-1}]\in S'\cap SL(2,5)\subseteq Z$, where
$i, j\in\{1,2,\dots,21\}$ and $r, k\in\{1, 2, 3\}$. It follows
that $\{b_1s_i, b_2s_i, \ldots, b_{21}s_i \;|\; i=1, 2, 3\}$ is a
clique set for $\mathcal{A}_{G}$ and so $\omega(\mathcal{A}_G)\geq 63$,
a contradiction . Therefore  $C_N(Sol(N))$ is a proper subgroup
of $N$. Thus $C_N(Sol(N))$ is solvable so that $C_N(Sol(N))\leq
Sol(N)$. Now the same proof as above, gives $\omega(\mathcal{A}_N)\geq
60$, which is a contradiction. This completes the proof.
\end{proof}
\begin{lem}\label{30} Let $G$ be a finite non-solvable group such that
$\omega(\mathcal{A}_G) \leq 57$  and $\frac{G}{Sol(G)}\cong S_5$. Then
$G=G''\langle a\rangle Sol(G)$, where $a^2\in Z(G)$ and $G''\cong
A_5$ or $\mathrm{SL}(2,5)$.
\end{lem}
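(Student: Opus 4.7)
The plan is to reduce the $S_5$ case to the $A_5$ case of Lemma~\ref{15}. Let $S=Sol(G)$ and let $H\trianglelefteq G$ be the full preimage of $A_5\leq S_5\cong G/S$, so $[G:H]=2$ and $H/S\cong A_5$. Since $Sol(H)$ is characteristic in $H$ and hence normal solvable in $G$, it lies in $S$; and $S$ is a normal solvable subgroup of $H$, so $S\leq Sol(H)$. Therefore $Sol(H)=S$. By Lemma~\ref{1}, $\omega(\mathcal{A}_H)\leq\omega(\mathcal{A}_G)\leq 57$, so Lemma~\ref{15} applies to $H$ and yields either $H\cong Z(H)\times A_5$ or $H=Z(H)\mathrm{SL}(2,5)$.

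Next I would verify $S=Z(H)$ and extract $G''$. In the first case this is immediate, and a direct commutator calculation (using perfectness of $A_5$) gives $H'=H''=A_5$. In the second case, $-I\in\mathrm{SL}(2,5)$ is central in $\mathrm{SL}(2,5)$ and commutes with every element of $Z(H)$, so $-I\in Z(H)$; hence $\mathrm{SL}(2,5)\cap Z(H)=\{\pm I\}$, $H/Z(H)\cong A_5$, and $Sol(H)=Z(H)$. Using perfectness of $\mathrm{SL}(2,5)$, a similar commutator calculation gives $H'=H''=\mathrm{SL}(2,5)$. Because $[G:H]=2$ we have $G'\leq H$, so $G''\leq H'$, while $H\leq G$ gives $H''\leq G''$. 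Since $H'=H''$ in both cases, $G''=H''$ is isomorphic to $A_5$ or $\mathrm{SL}(2,5)$, and $H=Z(H)G''=SG''$.

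Finally, pick $a\in G$ with $aS$ a transposition in $G/S\cong S_5$; such an $a$ lies outside $H$ (transpositions are odd) and satisfies $a^2\in S=Z(H)$. Hence $a^2$ commutes with every element of $H$ and, trivially, with $a$, so with $\langle H,a\rangle=G$, giving $a^2\in Z(G)$. Combining $H=G''S$ with $G=H\langle a\rangle$ and the normality of $S$ in $G$ yields $G=G''\langle a\rangle S$, the desired decomposition. The main obstacle is the bookkeeping of paragraph~2, namely pinning down $S=Z(H)$ and $G''=H''$; once these identifications are secure the commutator chase showing $a^2\in Z(G)$ is essentially immediate, and the hypothesis $\omega(\mathcal{A}_G)\leq 57$ is used only once, to invoke Lemma~\ref{15}.
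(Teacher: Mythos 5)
Your argument is correct and follows essentially the same route as the paper: both pass to the index-two subgroup $H=G'S$ (the preimage of $A_5$), reduce to Lemma~\ref{15} after checking $Sol(H)=S$, identify $G''=H''\cong A_5$ or $\mathrm{SL}(2,5)$, and adjoin an element $a$ of order $2$ modulo $S$ lying outside $H$. Your version is somewhat more careful in pinning down $S=Z(H)$ and $G''=H''$ and in verifying $a^2\in Z(G)$, steps the paper compresses into ``it is easy to see.''
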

\begin{proof} Let $S=Sol(G)$. Since $\frac{G}{S}\cong S_5$, it follows that $\frac{G'S}{S}\cong
A_5$ and $|G:G'S|=2$. Note that $S\subseteq Sol(G'S)$ and since
$Sol(G'S)$ is a normal subgroup of $G$, we have $Sol(G'S)=S$. By
the proof of Lemma \ref{15}, $S=Z(G'S)$ and $G'S=S\times A_5$ or
$G'S=S \mathrm{SL}(2,5)$. Thus $G''=A_5$ or $G''=\mathrm{SL}(2,5)$.\\
 Now $G''$ is a non-solvable subgroup of $G'$ and so $\frac{G''S}{S}$ is
a  non-solvable subgroup of $\frac{G'S}{S}\cong A_5$ which
implies that $G''S=G'S$. It follows that  there exists an element
$aS$ of order $2$ in $\frac{G}{S}
 \backslash\frac{G''S}{S}$ and it is easy to see that $G=G''\langle
a\rangle S$ and $a^2\in Z(G)$. This completes the proof.
\end{proof}
\begin{lem}\label{6} Let $G$ be a finite group and
$\omega(\mathcal{A}_G)=\omega(\mathcal{A}_{\frac{G}{Sol(G)}})=57$. Then
$Z(G)=Sol(G)$.
\end{lem}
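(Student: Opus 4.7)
Write $S = \mathrm{Sol}(G)$. By Corollary \ref{7}, the hypothesis $\omega(\mathcal{A}_{G/S})=57$ forces $G/S \cong \mathrm{PSL}(2,7)$ or $\mathrm{PGL}(2,7)$. The inclusion $Z(G)\subseteq S$ is automatic, so my task reduces to proving $C_G(S)=G$. I argue by contradiction, supposing $C:=C_G(S)$ is a proper subgroup of $G$, and split into two cases along the template of Lemma \ref{15}.

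\textbf{Case 1 ($C$ solvable, hence $C\subseteq S$).} I produce a clique in $\mathcal{A}_G$ larger than $57$. Using the partition of $G/S$ from Proposition \ref{11} (and its PSL-analogue cited in the proof of Proposition \ref{50}), lift prime-order representatives $a_1,\dots,a_{36}$ of the $8$ Sylow $7$-subgroups and $28$ Sylow $3$-subgroups. Since each $a_i \notin S\supseteq C$, pick $s_i \in S$ with $[a_i,s_i]\neq 1$, and let $p_i\in\{3,7\}$ be the order of $a_iS$. Then
\[
\bigcup_{i=1}^{36}\Bigl(\{a_i\}\cup\{a_i^k s_i : 1\le k\le p_i-1\}\Bigr)
\]
has $8\cdot 7+28\cdot 3=140$ elements. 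A direct commutator calculation---using the prime order of $a_iS$, the relation $[a_i,s_i]\neq 1$, and the fact that in both $\mathrm{PSL}(2,7)$ and $\mathrm{PGL}(2,7)$ the centraliser of any order-$3$ or order-$7$ element is contained in its partition piece---shows this is a clique, contradicting $\omega(\mathcal{A}_G)=57$.

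\textbf{Case 2 ($C$ non-solvable).} Then $CS/S$ is a non-trivial non-solvable normal subgroup of $G/S$, so $[G:CS]\le 2$ and $CS/S\supseteq\mathrm{PSL}(2,7)$. Let $N\le C$ be non-solvable of least order; every proper subgroup of $N$ is solvable, $\mathrm{Sol}(N)=N\cap S$, and $N/\mathrm{Sol}(N)\cong\mathrm{PSL}(2,7)$ (the only non-abelian simple group of order dividing $|\mathrm{PGL}(2,7)|=336$). If $C_N(\mathrm{Sol}(N))\subsetneq N$, it is solvable by minimality, $\subseteq\mathrm{Sol}(N)$, and applying the Case 1 construction inside $N$ yields a $140$-clique in $\mathcal{A}_N\subseteq\mathcal{A}_G$ (Lemma \ref{1}), a contradiction. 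Otherwise $\mathrm{Sol}(N)=Z(N)$; since $\mathrm{PSL}(2,7)$ is perfect with Schur multiplier $\mathbb{Z}/2$, $N=Z(N)L$ with $L\cong\mathrm{PSL}(2,7)$ or $\mathrm{SL}(2,7)$ and $L\subseteq C$. If $S$ is non-abelian, choose $s_1,s_2\in S$ with $[s_1,s_2]\neq 1$ and a maximum clique $\{b_iZ(L)\}_{i=1}^{57}$ of $L/Z(L)$; the $114$ elements $\{b_i s_j : 1\le i\le 57,\, j\in\{1,2\}\}$ form a clique of $\mathcal{A}_G$, since $[b_i,s_j]=1$ and $[b_i,b_j]\notin L\cap S\subseteq Z(L)$ prevents any accidental commutation. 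If instead $S$ is abelian, then $S\subseteq C$ and $CS=C$; if $C=G$ this is an immediate contradiction, otherwise $[G:C]=2$ and $G/S\cong\mathrm{PGL}(2,7)$, in which case lifting a maximum clique of $G/S$ with order-$6$ (resp.\ order-$8$) generators in the $28$ cyclic-$6$ (resp.\ $21$ cyclic-$8$) pieces places $49$ representatives outside $C/S=\mathrm{PSL}(2,7)$, each admitting $s_i\in S$ with $[a_i,s_i]\neq 1$; adjoining $a_i s_i$ for these $49$ indices produces a $106$-clique of $\mathcal{A}_G$, contradicting $\omega(\mathcal{A}_G)=57$ one last time.

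The hardest parts will be the commutator bookkeeping in Case 1 and the careful choice of representatives in the $S$-abelian sub-case of Case 2 (selecting generators in the $\mathrm{PGL}(2,7)\setminus\mathrm{PSL}(2,7)$ tori so the $s_i$-extension functions). The Schur-multiplier input for $\mathrm{PSL}(2,7)$ and the overall architecture mirror Lemma \ref{15}, so once the partition data and centraliser claims are assembled the remaining steps are routine.
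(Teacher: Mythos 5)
Your argument reaches the right conclusion, but by a genuinely different (and much heavier) route than the paper's. The paper's proof of this lemma is short and direct: it quotes \cite[Lemma 4.3]{AM} to get $S=Sol(G)$ abelian, supposes $[a,x]\neq 1$ for some $a\in S$ and $x\in G\setminus S$, uses Proposition \ref{50} to extend $\{xS\}$ to a maximum clique $\{xS,x_1S,\dots,x_{56}S\}$ of $\mathcal{A}_{G/S}$, lifts it to a maximum clique $R=\{x,x_1,\dots,x_{56}\}$ of $\mathcal{A}_G$ (this is where the exact equality $\omega(\mathcal{A}_G)=57$ is used), and then notes that $ax$ must commute with some $x_i\in R$, forcing $[x_i,x]\in S$ and contradicting the clique property downstairs. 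You instead transplant the entire architecture of Lemma \ref{15} (case split on $C_G(S)$, minimal non-solvable subgroup, Schur multiplier, explicit oversized cliques). What your route buys: it only uses the inequality $\omega(\mathcal{A}_G)\leq 57$ rather than equality, and it does not import the abelianness of $S$ from \cite{AM}. What it costs: roughly a page of case analysis replacing five lines.

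One step does not hold as written. In Case 1, the block $\{a_i\}\cup\{a_i^k s_i : 1\leq k\leq 6\}$ attached to an order-$7$ representative need not be a clique: a direct computation shows that $a_i^j s_i$ and $a_i^k s_i$ commute if and only if $[a_i^{k-j},s_i]=1$, and $[a_i,s_i]\neq 1$ does not force $[a_i^m,s_i]\neq 1$ for $2\leq m\leq 5$ when $S$ is non-abelian (for instance, if $a_i$ inverts $s_i$ then $a_i^2$ centralizes $s_i$ and $a_i^3s_i$ commutes with $a_i^5s_i$; note $S$ is not known to be abelian in your Case 1). This is harmless for your purpose, since the sub-blocks $\{a_i,a_is_i,a_i^2s_i\}$ are always cliques (every relevant condition there reduces to $[a_i,s_i]\neq 1$), yielding $36\cdot 3=108>57$ elements and the same contradiction; but the $140$-clique claim should be weakened accordingly, both here and where you reuse the construction inside $N$. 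The remaining constructions (the $114$- and $106$-element cliques in Case 2, and the reduction of $Z(G)=Sol(G)$ to $C_G(S)=G$) check out.
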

\begin{proof}By \cite[Lemma 4.3]{AM}, $S=Sol(G)$ is  abelian.
 By Corollary \ref{7},
$\frac{G}{S}\cong \mathrm{PSL}(2,7)$ or $\frac{G}{S}\cong
\mathrm{PGL}(2,7)$. Suppose  for some $x\in G\backslash S$ and
some $a\in S$, that $[a,x]\neq 1$. Then, as
$\omega(\mathcal{A}_G)=57$, by Proposition \ref{50}, there exist
$x_1S, \ldots, x_{56}S \in
 \frac{G}{S}$ such that $T=\{xS, x_1S,\ldots, x_{56}S\}$  is a  clique
  of  $\mathcal{A}_{\frac{G}{S}}$. Now the set $R=\{x, x_1,\ldots, x_{56}\}$ is a maximum clique  of $\mathcal{A}_G$.
  Thus there exists  $x_i\in R$ such that $[x_i,ax]=1$. So $[x_i,x]\in S$, which
  is a contradiction. Therefore $[a,x]=1$ for all $a\in S$  and all
  $x$ in $G\backslash S$. Thus $S\subseteq Z(G)$ and the proof is
  complete.
\end{proof}
\begin{lem}\label{4} Let $G$ be a finite group such that $\omega(\mathcal{A}_G)\leq 57$. If
 there exists a central subgroup $B$ of $G$ of order no
more than $2$ such that $G/B \cong \mathrm{PSL}(2,7)$, then
$G\cong B\times \mathrm{PSL}(2,7)$ or $SL(2,7)$.
\end{lem}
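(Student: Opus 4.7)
The plan is to reduce to $|B|=2$ and then identify $G$ as one of the two perfect central extensions of $\mathrm{PSL}(2,7)$ by $\mathbb{Z}/2\mathbb{Z}$. This will mirror \cite[Lemma 4.2]{AM}, the analogue for $A_5$ and $\mathrm{SL}(2,5)$ invoked in the proof of Lemma \ref{15} above. The hypothesis $\omega(\mathcal{A}_G)\leq 57$ is not expected to play a crucial role: the conclusion should follow from the central extension hypothesis alone, via the Schur multiplier computation for $\mathrm{PSL}(2,7)$.

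If $|B|=1$ there is nothing to prove, so assume $|B|=2$. Since $\mathrm{PSL}(2,7)$ is a non-abelian simple group, it is perfect, hence $(G/B)'=G/B$ and $G'B=G$. Then $G'\cap B$ has order $1$ or $2$, and $G'/(G'\cap B)\cong G/B\cong\mathrm{PSL}(2,7)$. In the split case $G'\cap B=1$, the centrality of $B$ together with $G=G'B$ forces $G=G'\times B$, so that $G'\cong\mathrm{PSL}(2,7)$ and the first alternative of the conclusion holds. The remaining case $B\subseteq G'$ yields $G=G'B=G'$, so $G$ is a perfect central extension of $\mathrm{PSL}(2,7)$ by a group of order $2$.

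The main step, and where we expect the essential content of the argument to reside, is to identify this perfect extension uniquely. The Schur multiplier of $\mathrm{PSL}(2,7)$ is $\mathbb{Z}/2\mathbb{Z}$ and its universal perfect central extension is the covering map $\mathrm{SL}(2,7)\To\mathrm{PSL}(2,7)$, whose kernel has order $2$. By the universal property there is a surjection $\mathrm{SL}(2,7)\To G$; since both groups have order $336$, this surjection must be an isomorphism, giving $G\cong\mathrm{SL}(2,7)$. The main obstacle in a fully self-contained write-up is justifying uniqueness of the perfect extension without black-boxing Schur multipliers; one could do this by lifting a standard presentation of $\mathrm{PSL}(2,7)$ to $G$ and checking in each case that the ambiguity of the lift lies in $B$, but the cleanest route is to cite a standard reference for the Schur multiplier, in exact parallel with the $A_5$/$\mathrm{SL}(2,5)$ bookkeeping already performed in \cite[Lemma 4.2]{AM}.
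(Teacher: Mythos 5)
Your proposal is correct and follows essentially the same route as the paper: write $G=G'B$ using perfectness of $\mathrm{PSL}(2,7)$, split on whether $G'\cap B$ is trivial, and in the non-split case identify the perfect central extension via the Schur multiplier $M(\mathrm{PSL}(2,7))\cong\mathbb{Z}_2$. The only cosmetic difference is at the last step, where the paper quotes the fact that $\mathrm{SL}(2,7)$ is the unique perfect group of order $336$ while you obtain the isomorphism from the universal property of the covering group; both arguments (like the paper's) never actually use the hypothesis $\omega(\mathcal{A}_G)\leq 57$.
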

\begin{proof} Since $G/B\cong PSL(2,7)$, it follows that $G=G'B$ and
$G'/(B\cap G')\cong \mathrm{PSL}(2,7)$. Therefore if $G'\cap B=1$,
then $G\cong B\times \mathrm{PSL}(2,7)$. Suppose that $G'\cap B
\neq 1$ so that $|B|=2$. According to the Universal Coefficient
Theorem (see \cite[Theorem 11.4.18 ]{R}) the central extension
 $B \longrightarrow G \longrightarrow G/B $ determines a homomorphism $\delta :
M(G/B)\rightarrow B$ so that $\mathrm{Im}\; \delta=G'\cap B$, where
$M(G/B)$ is the Schur multiplier  of $G/B$ (see \cite[page 354,
Exercise 10]{R}). On the other hand, we know that the Schur multiplier
of $\mathrm{PSL}(2,7)$ is $\mathbb{Z}_2$. Hence $G'\cap B=B$ and
so $B\leq G'$. It follows that $G$ is a perfect group of order
$336$. It is well-known that the only perfect group of order $336$
is $\mathrm{SL}(2,7)$. This completes the proof.
\end{proof}
\begin{lem}\label{8} Let $G$ be a finite non-solvable group such that $\omega(\mathcal{A}_G)\leq 57$
and  $\frac{G}{S}\cong \mathrm{PSL}(2,7)$. Then $G\cong Z(G)\times
\mathrm{PSL}(2,7)$ or $G\cong Z(G)\mathrm{SL}(2,7)$.
\end{lem}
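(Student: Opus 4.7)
The plan is to mirror the proof of Lemma \ref{15}, replacing $A_5$ and $\mathrm{SL}(2,5)$ with $\mathrm{PSL}(2,7)$ and $\mathrm{SL}(2,7)$ throughout, and using Lemma \ref{4} in the role that \cite[Lemma 4.2]{AM} plays there. First I would pin down that $\omega(\mathcal{A}_G)=57$ and $S=Z(G)$. Since $\omega(\mathcal{A}_{\mathrm{PSL}(2,7)})=57$ (as recalled in the introduction), Lemma \ref{1}(ii) forces $\omega(\mathcal{A}_{G/S})=\omega(\mathcal{A}_G)=57$, and Lemma \ref{6} then yields $S=Z(G)$. Thus the problem reduces to analysing the central extension $Z(G)\hookrightarrow G\twoheadrightarrow G/Z(G)\cong\mathrm{PSL}(2,7)$.

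Next I would extract the basic algebraic consequences of this extension. Since $\mathrm{PSL}(2,7)$ is perfect, $G=G'Z(G)$, and setting $K:=G'\cap Z(G)$ gives $G'/K\cong G/Z(G)\cong\mathrm{PSL}(2,7)$. Repeating verbatim the Universal Coefficient Theorem argument from the proof of Lemma \ref{4} on the extension $Z(G)\hookrightarrow G\twoheadrightarrow\mathrm{PSL}(2,7)$ produces a homomorphism $\delta:M(\mathrm{PSL}(2,7))\to Z(G)$ with $\mathrm{Im}\,\delta=K$; since $M(\mathrm{PSL}(2,7))\cong\mathbb{Z}_2$, we obtain $|K|\leq 2$.

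Now the hypotheses of Lemma \ref{4} are met by the pair $(G',K)$: $K$ is central in $G'$ of order at most $2$, $G'/K\cong\mathrm{PSL}(2,7)$, and $\omega(\mathcal{A}_{G'})\leq\omega(\mathcal{A}_G)\leq 57$ by Lemma \ref{1}(i). Lemma \ref{4} therefore yields either $G'=K\times L$ with $L\cong\mathrm{PSL}(2,7)$, or $G'\cong\mathrm{SL}(2,7)$. In the first case $L\cap Z(G)=L\cap K=1$, so $G=G'Z(G)=L\cdot Z(G)\cong Z(G)\times\mathrm{PSL}(2,7)$. In the second case $G=G'Z(G)=Z(G)\,\mathrm{SL}(2,7)$, which is the remaining conclusion.

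I do not anticipate a serious obstacle: every ingredient is already in hand, the Schur multiplier computation is identical to the one inside Lemma \ref{4}, and the case split after invoking Lemma \ref{4} is bookkeeping. The one spot that requires a moment of care is feeding Lemma \ref{4} with the commutator-intersection $K$ (which has order at most $2$) rather than $Z(G)$ itself (whose order is a priori unrestricted), so that the \emph{central subgroup of order at most $2$} hypothesis of Lemma \ref{4} is satisfied.
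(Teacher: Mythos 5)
Your proposal is correct and follows essentially the same route as the paper's own proof: establish $\omega(\mathcal{A}_{G/S})=\omega(\mathcal{A}_G)=57$, invoke Lemma \ref{6} to get $S=Z(G)$, rerun the Universal Coefficient Theorem argument from Lemma \ref{4} on the central extension $Z(G)\rightarrow G\rightarrow G/Z(G)$ to bound $|G'\cap Z(G)|\leq 2$, and then apply Lemma \ref{4} to $G'$ and split into the two cases. The point you flag at the end --- that Lemma \ref{4} must be fed the order-at-most-$2$ subgroup $K=G'\cap Z(G)$ rather than $Z(G)$ itself --- is exactly the care the paper implicitly takes.
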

\begin{proof} Since $57=\omega(\mathcal{A}_{\frac{G}{S}})\leq \omega(\mathcal{A}_G)\leq 57$, we have  $\omega(\mathcal{A}_{\frac{G}{S}})=\omega(\mathcal{A}_G)=57$.
By Lemma \ref{6},  $S=Z(G)$. Now by the same argument as in Lemma
\ref{4}, considering the central extension $Z(G)\rightarrow
G\rightarrow\frac{G}{Z(G)}$ we have that $K=G'\bigcap Z(G)$ is of
order no more than $2$, $G=G'Z(G)$ and $\frac{G'}{K}\cong
\mathrm{PSL}(2,7)$. Thus Lemma \ref{4} implies that there is a
subgroup $L$ of $G'$ such that $G'=K\times L$ or $G'\cong
\mathrm{SL}(2,7)$ and $L\cong \mathrm{PSL}(2,7)$. Now if
$G'=K\times L$, then $G=G'Z(G)=KLZ(G)=LZ(G)$ and it is clear that
$L\cap Z(G)=1$. So $G=L\times Z(G)\cong \mathrm{PSL}(2,7)\times
Z(G)$. Otherwise $G'\cong \mathrm{SL}(2,7)$, and so $G\cong
Z(G)\mathrm{SL}(2,7)$.
\end{proof}
\begin{lem}\label{35} Let $G$ be a  finite non-solvable group such that $\omega(\mathcal{A}_G)\leq 57$
and  $\frac{G}{S}\cong \mathrm{PGL}(2,7)$. Then $G=G''\langle
a\rangle Z(G)$, where $a^2\in Z(G)$ and $G''\cong
\mathrm{PSL}(2,7)$ or $G''\cong \mathrm{SL}(2,7)$.
\end{lem}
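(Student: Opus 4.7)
The plan is to mirror the proof of Lemma \ref{30}, replacing its $S_5$-machinery by the $\mathrm{PGL}(2,7)$-analogue and invoking Lemma \ref{8} (in place of Lemma \ref{15}) on an index-$2$ subgroup of $G$. Using the formula $\omega(\mathcal{A}_{\mathrm{PGL}(2,q)})=q^2+q+1$ for $q>3$, we get $\omega(\mathcal{A}_{G/S})=57$; combined with the hypothesis and Lemma \ref{1}(ii) this forces $\omega(\mathcal{A}_G)=57$, and then Lemma \ref{6} yields $Z(G)=S$.

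Put $H=G'S$. Since $\mathrm{PGL}(2,7)'=\mathrm{PSL}(2,7)$ has index $2$ in $\mathrm{PGL}(2,7)$, we have $H/S\cong\mathrm{PSL}(2,7)$ and $|G:H|=2$. The solvable radical $\mathrm{Sol}(H)$ contains $S$, and its image in the simple group $H/S$ is trivial, so $\mathrm{Sol}(H)=S$. Because $\omega(\mathcal{A}_H)\le\omega(\mathcal{A}_G)=57$ by Lemma \ref{1}(i), Lemma \ref{8} applies to $H$ and yields either $H\cong Z(H)\times\mathrm{PSL}(2,7)$ or $H=Z(H)\,\mathrm{SL}(2,7)$.

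Since $S=Z(G)$ is central in $H$, the commutator $[G',S]$ is trivial and a direct expansion of $[G'S,G'S]$ gives $H'=G''$; reading off $H'$ in the two cases above yields $G''\cong\mathrm{PSL}(2,7)$ or $G''\cong\mathrm{SL}(2,7)$. Moreover $H/Z(H)$ is simple non-abelian, so $Z(H)=\mathrm{Sol}(H)=S=Z(G)$, and hence $H=G''Z(G)$.

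To finish, choose $a\in G$ such that $aS$ is an involution in the non-trivial coset of $\mathrm{PSL}(2,7)$ in $G/S\cong\mathrm{PGL}(2,7)$ (such an involution exists in $\mathrm{PGL}(2,7)\setminus\mathrm{PSL}(2,7)$, for instance the class of the diagonal matrix $\mathrm{diag}(1,-1)$, whose determinant $-1$ is a non-square modulo $7$). Then $a\in G\setminus H$ and $a^2\in S=Z(G)$, so
$$G=H\cup Ha=G''Z(G)\langle a\rangle=G''\langle a\rangle Z(G),$$
as required. The main bookkeeping point is verifying $Z(H)=Z(G)=S$, which is what allows Lemma \ref{8}'s structural conclusion about $H$ to transfer cleanly to the claimed decomposition of $G$.
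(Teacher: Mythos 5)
Your proof is correct and follows essentially the same route as the paper's: force $\omega(\mathcal{A}_G)=\omega(\mathcal{A}_{G/S})=57$, invoke Lemma \ref{6} to get $S=Z(G)$, apply Lemma \ref{8} to the index-$2$ subgroup $G'Z(G)$, and adjoin an involution from the outer coset. You supply a few details the paper leaves implicit (that $\mathrm{Sol}(G'S)=S$, that $H'=G''$, and that an involution exists in $\mathrm{PGL}(2,7)\setminus\mathrm{PSL}(2,7)$), but the argument is the same.
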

\begin{proof} Since $57=\omega(\mathcal{A}_{\frac{G}{S}})\leq \omega(\mathcal{A}_G)\leq 57$,  we have $\omega(\mathcal{A}_{\frac{G}{S}})=\omega(\mathcal{A}_G)=57$.
By Lemma \ref{6}, $S=Z(G)$ and so $\frac{G}{Z(G)}\cong
\mathrm{PGL}(2,7)$. Thus  $\frac{G'Z(G)}{Z(G)}\cong
\mathrm{PSL}(2,7)$ and it follows from Lemma \ref{8} that
$G'Z(G)=Z(G)\times \mathrm{PSL}(2,7)$ or
$G'Z(G)=Z(G)\mathrm{SL}(2,7)$ and $|\frac{G}{Z(G)}
:\frac{G'Z(G)}{Z(G)}|=2$. Thus $G''\cong \mathrm{PSL}(2,7)$ or
$G''\cong \mathrm{SL}(2,7)$. Suppose that $aZ(G)$ is an element of
$\frac{G}{Z(G)}\setminus \frac{G'Z(G)}{Z(G)}$ of order $2$. Then
$G=G''\langle a\rangle Z(G)$, where $a^2\in Z(G)$ and $G''\cong
\mathrm{PSL}(2,7)$ or $G''\cong \mathrm{SL}(2,7)$.
\end{proof}
\noindent{\bf Proof of Theorem \ref{1.3}.} This follows from Lemmas
\ref{15}, \ref{30}, \ref{8}, and \ref{35}.
\section{clique numbers of the non-commuting graphs of the minimal simple groups}
For a non-trivial abelian group $A$, we define
$\omega(\mathcal{A}_A)=1$.
\begin{lem}\label{20} Let $G$ be a group such that there exist non-trivial subgroups $A_1,\dots,A_n$ of $G$ with $G=\bigcup_{i=1}^nA_i$ and
$A_i \cap A_j=Z(G)$ for $i\neq j$.
\begin{enumerate}
\item If $C_G(g)\leq A_i$ for all
$g\in A_i\backslash Z(G)$, then
 $\omega(\mathcal{A}_G)=\sum_{i=1}^n \omega(\mathcal{A}_{A_i})$.
\item If every clique of $\mathcal{A}_{A_i}$ can be extended  to a maximum
clique  of $\mathcal{A}_{A_i}$ for each $i\in\{1,\dots,n\}$, then
the same property for $\mathcal{A}_G$ is true. In particular, if
all $A_i$'s are either abelian  or  $AC$-groups, the mentioned
property holds for $\mathcal{A}_G$.
\end{enumerate}
\end{lem}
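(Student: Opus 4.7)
The plan is to exploit the coset-partition structure: since $A_i\cap A_j=Z(G)$ for $i\neq j$, the sets $A_i\setminus Z(G)$ are pairwise disjoint and cover $G\setminus Z(G)$. Because $Z(G)\subseteq A_i$ forces $Z(G)\subseteq Z(A_i)$, the vertex set of $\mathcal{A}_{A_i}$ sits inside that of $\mathcal{A}_G$, and a subset of $A_i\setminus Z(G)$ is a clique of $\mathcal{A}_{A_i}$ if and only if it is a clique of $\mathcal{A}_G$. This dictionary reduces everything to analyzing the interaction between different $A_i$'s.

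For part (1) I would prove the two inequalities separately. Upper bound: any clique $X$ of $\mathcal{A}_G$ decomposes disjointly as $X=\bigsqcup_i(X\cap A_i)$, each piece a clique of $\mathcal{A}_{A_i}$, so $|X|\leq\sum_i\omega(\mathcal{A}_{A_i})$. Lower bound: pick a maximum clique $X_i$ of $\mathcal{A}_{A_i}$ and set $X=\bigcup_i X_i$. Elements inside a single $X_i$ fail to commute by choice; for $x\in X_i$ and $y\in X_j$ with $i\neq j$, the centralizer hypothesis $C_G(x)\leq A_i$ would force $y\in A_i\cap A_j=Z(G)\leq Z(A_j)$ whenever $[x,y]=1$, contradicting $y\in A_j\setminus Z(A_j)$. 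Hence $X$ is a clique of size $\sum\omega(\mathcal{A}_{A_i})$. The convention $\omega(\mathcal{A}_A)=1$ for abelian $A$ is what is needed so that an abelian $A_i$ contributes a single representative from $A_i\setminus Z(G)$ to this union.

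For part (2) I would run the same decomposition. Given any clique $X$ of $\mathcal{A}_G$, set $X_i=X\cap A_i$ and use the hypothesis to extend $X_i$ to a maximum clique $Y_i$ of $\mathcal{A}_{A_i}$. Then $Y=\bigcup_i Y_i$ contains $X$, is a clique of $\mathcal{A}_G$ by the same cross-index argument as in part (1), and has size $\sum\omega(\mathcal{A}_{A_i})=\omega(\mathcal{A}_G)$, hence is maximum. The \emph{in particular} statement then falls out: when $A_i$ is abelian the extension property is vacuous under the paper's convention, while when $A_i$ is an $AC$-group Lemma \ref{9} directly supplies it.

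The step I expect to be the main obstacle is a subtlety of the hypothesis rather than a computational one: part (2) as literally stated does not repeat the centralizer condition $C_G(g)\leq A_i$, but the gluing of local maximum cliques into a global one appears to require it. I would treat this condition as implicitly in force throughout the proof of part (2), inherited from part (1), and would flag the inheritance explicitly in the write-up so that the conclusion \emph{the same property holds for} $\mathcal{A}_G$ is proved under the natural combined hypothesis.
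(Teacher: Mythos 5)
Your proof of part (1) is essentially identical to the paper's: the same decomposition $X=\bigcup_i(X\cap A_i)$ for the upper bound, and the same union of maximum cliques $W_i$ with the same contradiction ($a\in C_G(b)\leq A_j$ forces $a\in A_i\cap A_j=Z(G)$) for the lower bound. For part (2) the paper offers no argument at all ("It is straightforward"), so your fleshed-out extension argument goes beyond it, and your observation that the centralizer hypothesis $C_G(g)\leq A_i$ must be carried over from part (1) for the gluing to work is correct and is indeed left implicit in the paper.
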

\begin{proof}
(1) \; If $X$ is any clique of $\mathcal{A}_G$, then
$X=\bigcup_{i=1}^nX_i$, where $X_i\subset A_i\backslash Z(G)$ for
each $i\in\{1,\dots,n\}$. By hypothesis, $|X|=\sum_{i=1}^n |X_i|$
and since $|X_i|\leq \omega(\mathcal{A}_{A_i})$, it follows that
$|X|\leq \sum_{i=1}^n \omega(\mathcal{A}_{A_i})$. Now let $W_i$ be a
maximum clique
 of $\mathcal{A}_{A_i}$ for each $i\in\{1,\dots,n\}$. We claim that $W=\displaystyle\bigcup_{i=1}^n W_i$ is a
maximum clique  for $\mathcal{A}_G$. Suppose, for a
contradiction,  that there exist two distinct commuting elements
$a$ and $b$ in $\displaystyle\bigcup_{i=1}^n W_i$. Thus there
exist $i\neq j$ such that $a\in A_i$ and $b\in A_j$. Therefore
$a\in C_G(b)\leq A_j$ and so $a\in A_i\cap A_j=Z(G)$, which is
impossible. Since $|W|=\sum_{i=1}^n \omega(\mathcal{A}_{A_i})$, the
proof of (1) is complete.\\
(2) \; It is straightforward.\\
\end{proof}

{\noindent \bf Proof of Theorem \ref{1.4}.} (i) The Suzuki group
$G$ contains subgroups $F$, $A$, $B$ and $C$ such that $|F|=q^2$,
$|A|=q-1$, $|B|=q-2r+1$ and $|C|=q+2r+1$ (see \cite[Chapter XI,
Theorems 3.10 and 3.11]{HB}). Also by \cite[pp. 192-193, Theorems
3.10 and 3.11]{HB}, the conjugates of $A$, $B$, $C$ and $F$  in
$G$ form a partition for $G$, and $A$, $B$, $C$ are cyclic. These
subgroups are all centralizers of some elements in $G$ and $F$ is
a Sylow 2-subgroup of $G$.\\ Now \cite[Chapter XI, Theorems 3.10
and 3.11]{HB} implies that the number of conjugates of $C$, $B$,
$A$ and $F$ in $G$ are respectively,
$\alpha=\frac{q^2(q-1)(q^2+1)}{4(q+2r+1)}$,
$\beta=\frac{q^2(q-1)(q^2+1)}{4(q-2r+1)}$,
$\gamma=\frac{q^2(q^2+1)}{2}$  and  $\delta=q^2+1$
 and also
$$G=\cup_{i=1}^{\delta} C_G(f_i) \bigcup  \cup_{i=1}^{\gamma} C_G(a_i)
\bigcup  \cup_{i=1}^{\beta} C_G(b_i)  \bigcup  \cup_{i=1}^{\alpha}
C_G(c_i),$$
 Now by \cite[Chapter XI, proof of Lemma 5.9]{HB}, $|C_F(g):Z(F)|=2$, for all $g\in F\backslash Z(F)$. If $C_F(g)=H$, then
 $|\frac{H}{Z(H)}|=2$ which implies that  $H$ is abelian. It follows that $F$ is an AC-group.
 Let $\{a_1,a_2,\dots,a_n\}$ be a clique  of $\mathcal{A}_F$. Then
 $F=C_F(a_1)\cup \cdots \cup C_F(a_n)$
and the set $\{ \frac{C_F(a_i)}{Z(F)} \;|\; i=1,2,\ldots ,n\}$
forms a partition for $\frac{F}{Z(F)}$. Thus
$\omega(\mathcal{A}_F)=q-1$.
 Now it follows from  Lemma \ref{20}, that
  $$\omega(\mathcal{A}_{\mathrm{Sz}(q)})=(q^2+1)(q-1)+\frac{q^2(q^2+1)}{2}+\frac{q^2(q^2+1)(q-1)}{4(q+2r+1)}
+\frac{q^2(q^2+1)(q-1)}{4(q-2r+1)}.$$
(ii) \; It follows from Lemma \ref{20} and the proof of part (i). $\hfill \Box$\\

{\noindent\bf Proof of Theorem \ref{26}.}
  Let $G=\mathrm{PSL}(3,3)$. It is easy to see (e.g., by {\sf
GAP} \cite{GAP}) that the set of order elements of $G$ is
$\{1,2,3,4,6,8,13\}$ and if $A=\{C_G(g) \;|\; g\in G, \;
|C_G(g)|=6 \}$, $B=\{ C_G(g) \;|\; g\in G, \; |C_G(g)|=8 \}$,
$C=\{C_G(g) \;|\; g\in G, \; |C_G(g)|=9\}$ and $D=\{C_G(g) \;|\;
g\in G, \; |C_G(g)|=13\}$, then $|A|=468$, $|B|=351$, $|C|=104$
and $|D|=144$. Also we know that if $|C_G(g)|\in \{6,8\}$, then
$C_G(g)$ is a cyclic subgroup of $G$ and so there exists $a\in G$
such that $C_G(g)=\langle a \rangle$. It follows that   $\langle
a \rangle=C_G(a)=C_G(g)$. Thus
 there exist elements $a_i,b_j,c_k,d_\ell\in G$ such that
$|C_G(a_i)|=6$ for $1\leq i\leq 468$, $|C_G(b_i)|=8$ for $1\leq
i\leq 351$, $|C_G(c_i)|=9$ for $1\leq i\leq 104$ and
$|C_G(d_i)|=13$ for $1\leq i\leq 144$. Now it is easy to see
(e.g., by GAP \cite{GAP}) that $$G=\bigcup_{x\in X} C_G(x),$$
where $X=\{a_1, \ldots, a_{468}, b_1, \dots, b_{351}, c_1, \dots,
c_{104}, d_1, \ldots, d_{144}\}$. Since the set of order elements
of $G$ is $\{1,2,3,4,6,8,13\}$, it follows  that $X$ is a clique
for $\mathcal{A}_G$. Also since for all $x\in X$, $C_G(x)$ is
abelian, we have $\omega(\mathcal{A}_G)=|X|=468+351+104+144=1067$.
This completes the proof. $\hfill \Box$\\

\begin{rem} It is not true that every clique of
the non-commuting graph of $G=\mathrm{PSL}(3,3)$ can be extended
to a maximum clique. It can be seen that there are two distinct
elements $x_1,x_2\in X$ such that $C_G(x_1)\cap C_G(x_2)$
contains a non-trivial element $a$. Now $\{a\}$ cannot be
extended to a maximum clique. On the other hand it is easy to see that every clique containing only elements of orders in $\{6,8,13\}$
 can be extended to a maximum clique. We leave the easy
proof to the reader.
\end{rem}
{\noindent\bf Acknowledgements.} The authors are grateful to the referees for their very helpful comments.
The research of the first and third authors were supported by the Center of Excellence for Mathematics, University of Isfahan. The first author's research was in part supported by a grant from IPM (No. 87200118).

\end{document}